\newtheorem{theorem}{Theorem}[section]
\newtheorem{lemma}[theorem]{Lemma}       
\newtheorem{corollary}[theorem]{Corollary}
\newtheorem{proposition}[theorem]{Proposition}
\theoremstyle{remark}
\newtheorem{remark}[theorem]{Remark}
\newtheorem{example}[theorem]{Example}
 \def\beqlb{\begin{eqnarray}}\def\eeqlb{\end{eqnarray}}
 \def\beqnn{\begin{eqnarray*}}\def\eeqnn{\end{eqnarray*}}
 \def\mbb{\mathbb}
 \def\qed{\hfill$\Box$\medskip}
\newcommand{\bcen}{\begin{center}}
\newcommand{\ecen}{\end{center}}
\newcommand{\bgeqn}{\begin{equation}}
\newcommand{\edeqn}{\end{equation}}
\newcommand{\cal}{\mathcal}
\def\dz{\delta}
\def\rar{\rightarrow}
\renewcommand{\P}{\mathbb{P}}
\newcommand{\N}{\mathbb{N}}
\newcommand{\E}{\mathbb{E}}
\begin{document}

\title[Tree-valued
Markov Processes]{Pruning Galton-Watson Trees and Tree-valued
Markov Processes}

\author{Romain Abraham} 

\address{
Romain Abraham,
MAPMO, CNRS UMR 6628,
F\'ed\'eration Denis Poisson FR 2964,
Universit\'e d'Orl\'eans,
B.P. 6759,
45067 Orl\'eans cedex 2
FRANCE.
}
  
\email{romain.abraham@univ-orleans.fr} 

\author{Jean-Fran\c cois Delmas}

\address{
Jean-Fran\c cois Delmas,
CERMICS,  Universit\'e Paris-Est, 6-8
av. Blaise Pascal, 
  Champs-sur-Marne, 77455 Marne La Vall\'ee, FRANCE.}

\email{delmas@cermics.enpc.fr}

\author{Hui He} 

\address{
Hui He,
MAPMO, CNRS UMR 6628,
F\'ed\'eration Denis Poisson FR 2964,
Universit\'e d'Orl\'eans,
B.P. 6759,
45067 Orl\'eans cedex 2
FRANCE
}

\address{and}

\address{
School of Mathematical Sciences,
Beijing Normal University,
Beijing 100875, P.R.China.
}
  
\email{hehui@bnu.edu.cn} 

\thanks{This work is partially supported by the ``Agence Nationale de
  la Recherche'', ANR-08-BLAN-0190, and by NSFC (10525103 and 10721091).}

\begin{abstract}
We present a new pruning procedure on discrete trees by adding marks
on the nodes of trees. This procedure allows us to construct and
study a tree-valued Markov process $\{ {\cal G}(u)\}$ by pruning
Galton-Watson trees and an analogous process $\{{\cal G}^*(u)\}$ by
pruning a critical or subcritical Galton-Watson tree conditioned to
be infinite. Under a mild condition on offspring distributions, we
show that the process $\{{\cal G}(u)\}$ run until its ascension time
has a representation in terms of $\{{\cal G}^*(u)\}$. A similar
result was obtained by Aldous and Pitman (1998) in the special case
of Poisson offspring distributions where they considered uniform
pruning of Galton-Watson trees by adding marks on the edges of
trees.
\end{abstract}

\keywords{Pruning, branching process,
Galton-Watson process, random tree, ascension process}

\subjclass[2000]{05C05, 
60J80, 60J27}

\maketitle

\section{Introduction}

Using percolation on the branches of a Galton-Watson tree, Aldous and
Pitman constructed by time-reversal in \cite{[AP98]} an inhomogeneous tree-valued Markov process that
starts from the trivial tree consisting only of the root and ends at
time 1 at the initial Galton-Watson tree. When the final Galton-Watson
tree is infinite, they define the ascension time $A$ as the first
time where the tree becomes infinite. They also define another process
by pruning at branches the tree conditioned on non-extinction and they show that,
in the special case of Poisson offspring distribution, some connections
exist between the first process up to the ascension time and the
second process.

Using the same kind of ideas, continuum-tree-valued Markov processes are
constructed in \cite{[AD10]} and an analogous relation is exhibited
between the process obtained by pruning the tree and the other one
obtained by pruning the tree conditioned on non-extinction. However,
in that continuous framework, such results hold under very general
assumptions on the
branching mechanism.

Using the ideas of the pruning procedure \cite{[AD10]} (which first
appeared in \cite{[M05]} for a different purpose), we propose here
to prune a Galton-Watson tree on the nodes instead of the branches so
that the connections pointed out in \cite{[AP98]} hold for any offspring distribution.

Let us first explain the pruning procedure.
Given a probability distribution $p=\{p_n, n=0,1,\cdots\}$, let ${\cal
G}_p$ be a Galton Watson tree with offspring distribution $p$. Let
$0<u<1$ be a constant. Then, if $\nu$ is an inner node of ${\cal
G}_p$ that has $n$ offsprings, we cut it (and discard all the sub-trees
attached at this node) with probability $u^{n-1}$
independently of the other nodes. The resulting tree  is
still a Galton-Watson tree  with offspring distribution $p^{(u)}$
defined by:
\begin{equation}\label{eq:def_pu_1}
p_n^{(u)}=u^{n-1} p_n,\quad \text{for } n\geq1
\end{equation}
and
\begin{equation}\label{eq:def_pu_2}
p_0^{(u)}=1-\sum_{n=1}^{\infty}p_n^{(u)}.
\end{equation}
This particular pruning is motivated by the following lemma whose
proof is postponed to Section \ref{sec:appendix}.

\begin{lemma}\label{lem:leaves}
Let $p$ and $q$ be two offspring distributions. Let ${\cal
G}_p$ and ${\cal G}_q$ be the associated Galton-Watson trees and let
$\#{\cal L}_{p}$ and $\#{\cal L}_q$ denote the number of leaves of
${\cal G}_p$ and ${\cal G}_q$, respectively. Then we have that
\begin{equation}\label{eqnGW}
\forall N\ge 1,\quad \P({\cal G}_p\in \cdot|\#{\cal L}_p=N)=\P({\cal G}_q\in \cdot|\#{\cal
L}_q=N)
\end{equation}
if and only if
$$\exists u>0,\ \forall n\ge 1,\ q_n=u^{n-1}p_n.$$

\end{lemma}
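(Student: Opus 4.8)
The plan is to reduce everything to the explicit product formula for the law of a finite Galton--Watson tree. For a finite rooted ordered tree $t$, write $n_k(t)$ for the number of vertices of $t$ having exactly $k$ children, so that $\#{\cal L}(t)=n_0(t)$ and $\P({\cal G}_p=t)=\prod_{k\ge0}p_k^{n_k(t)}$. The one combinatorial identity driving the proof is that, since a tree with $V$ vertices has $V-1$ edges, one has $\sum_{k\ge0}k\,n_k(t)=\big(\sum_{k\ge0}n_k(t)\big)-1$, and isolating the term $k=0$ yields $\sum_{k\ge1}(k-1)\,n_k(t)=\#{\cal L}(t)-1$. Thus the weighted internal degree $\sum_{k\ge1}(k-1)n_k(t)$ is determined by the number of leaves alone.

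For the \emph{if} direction, assume $q_n=u^{n-1}p_n$ for all $n\ge1$ and fix a finite tree $t$ with $N:=\#{\cal L}(t)$ leaves. Substituting into the product formula and using the identity above, I would compute
\[
\P({\cal G}_q=t)=q_0^{\,N}\prod_{k\ge1}\big(u^{k-1}p_k\big)^{n_k(t)}
=\Big(\tfrac{q_0}{p_0}\Big)^{N}u^{\,N-1}\,\P({\cal G}_p=t).
\]
Since the prefactor $(q_0/p_0)^N u^{N-1}$ depends on $t$ only through $N$, summing over all trees with $N$ leaves and dividing shows that the two conditional laws in \eqref{eqnGW} coincide.

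For the \emph{only if} direction, equality of the conditional laws forces $\P({\cal G}_q=t_1)/\P({\cal G}_q=t_2)=\P({\cal G}_p=t_1)/\P({\cal G}_p=t_2)$ for any two finite trees $t_1,t_2$ with the same number of leaves and positive $p$-mass; writing $r_k:=q_k/p_k$ on the common support $S=\{k\ge1:p_k>0\}$ and cancelling the $k=0$ factors (the leaf counts agree), this reads $\prod_{k\ge1}r_k^{\,n_k(t_1)-n_k(t_2)}=1$. I would first note that $p_n=0$ forces $q_n=0$ for $n\ge1$ (a star with $n$ leaves would otherwise have positive $q$-mass but zero $p$-mass at a fixed leaf count), so the supports agree. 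The heart of the matter is then to produce, for each pair $a,b\in S$, two trees with equal leaf counts whose profiles differ in a controlled way. For this I would use the grafting operation that replaces a leaf by a vertex carrying $k$ leaf-children, which raises $n_k$ by $1$ and $\#{\cal L}$ by $k-1$: starting from the trivial tree and grafting $(b-1)$ $a$-stars, respectively $(a-1)$ $b$-stars, at successive leaves gives two trees with the same leaf number $1+(a-1)(b-1)$ whose profiles differ only through $n_a$ and $n_b$. The displayed relation then yields $r_a^{\,b-1}=r_b^{\,a-1}$ for all $a,b\in S$. Fixing one $a_0\in S$ with $a_0\ge2$ and setting $u:=r_{a_0}^{1/(a_0-1)}>0$, taking positive roots gives $r_b=u^{b-1}$ for every $b\in S$ (the value $b=1$, if present, is pinned down separately by grafting a single unary node onto a star, which shows $r_1=1$), i.e. $q_n=u^{n-1}p_n$.

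The main obstacle is precisely this last construction: because the support $S$ may omit small integers (for instance $S=\{3\}$), one cannot simply compare a star with a binary tree, and one must exhibit enough explicit pairs of equal-leaf trees to span the lattice $\{(m_k):\sum_{k\ge1}(k-1)m_k=0\}$ of admissible profile differences. The grafting construction supplies generators of this lattice and is what makes the linear functional $k\mapsto\log r_k$ proportional to $k\mapsto k-1$. I would also dispose of the degenerate cases $S\subseteq\{1\}$ (paths, where every tree has a single leaf) and $p_0=1$ by direct inspection, and note throughout that the conditionings in \eqref{eqnGW} are understood over the values of $N$ for which both sides are defined.
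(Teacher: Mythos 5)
Your proposal is correct and takes essentially the same approach as the paper: the ``if'' direction is the identical computation via the product formula and the identity $\sum_{k\ge1}(k-1)n_k(\mathbf{t})=\#\mathcal{L}(\mathbf{t})-1$, and the ``only if'' direction, like the paper's, fixes a reference arity $n_0\ge 2$ in the support, sets $u=(q_{n_0}/p_{n_0})^{1/(n_0-1)}$, and compares two explicit trees with equal leaf counts to force $r_a^{\,b-1}=r_b^{\,a-1}$. The only (cosmetic) difference is the choice of comparison trees — your caterpillar of $(b-1)$ $a$-stars versus $(a-1)$ $b$-stars, where the paper uses a tree with $n-1$ nodes of arity $n_0$ and $n_0-1$ nodes of arity $n$ versus one with $2(n-1)$ nodes of arity $n_0$ — which yields exactly the same relation after cancellation.
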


This lemma can be viewed as the discrete analogue of Lemma 1.6 of
\cite{[AD08]} that explains the choice of the pruning parameters for
the continuous case. In \cite{[Al91]}, a similar result for Poisson
Galton-Watson trees was obtained when conditioning by the total
number of vertices, which explains why Poisson-Galton-Watson trees play
a key role in \cite{[AP98]}.

Using the pruning at nodes procedure, given a critical offspring distribution
$p$, we construct in Section 4 a tree-valued (inhomogeneous)
Markov processes
$({\cal G}(u),0\le u\le \bar u)$ for some $\bar u\ge 1$, such that
\begin{itemize}
\item the process is non-decreasing,
\item for every $u$, ${\cal G}(u)$ is a Galton-Watson tree with
  offspring distribution $p^{(u)}$,
\item the tree is critical for $u=1$, sub-critical for $u<1$ and
  super-critical for $1<u\le\bar u$.
\end{itemize}

Let us state the main properties that we prove for that process and
let us compare them with the results of \cite{[AP98]}. We write
$(\mathcal{G}^{AP}(u))$ for the tree-valued Markov process defined in \cite{[AP98]}.

In Section \ref{sec:processes}, we compute the forward transition
probabilities and the forward transition rates for that process and
exhibit a martingale that will appear several times (see Corollary
\ref{CorL}). For a tree $\mathbf{t}$, we set
\begin{equation}\label{eq:defh}
M(u,\mathbf{t})=\frac{(1-\mu(u))\#{\cal L}(\mathbf{t})}{p_0^{(u)}}
\end{equation}
where $\#{\cal L}(\mathbf{t})$ denotes the number of leaves of
  $\mathbf{t}$ and 
$\mu(u)$ if the mean of offsprings in $\mathcal{G}(u)$. Then, the
  process
$$\bigl(M(u,\mathcal{G}(u)),0\le u<1\bigr)$$
is a martingale with respect to the filtration generated by $\mathcal{G}$.
In \cite{[AP98]}, the martingale that appears (Corollary 23) for
Poisson-Galton-Watson trees is $(1-\mu(u))\#\mathcal{G}^{AP}(u)$.

When the tree $\mathcal{G}(u)$ is super-critical, it may be infinite. We
define the ascension time $A$ by:
$$A=\inf\{u\in[0,\bar u],\#\mathcal{G}(u)=\infty\}$$
with the convention $\inf\emptyset=\bar u$.
We can then compute the joint law of $A$
and ${\cal G}_{A-}$ (i.e. the tree just before it becomes infinite),
see Proposition \ref{PropA}: we set $F(u)$ the extinction
probability of $\mathcal{G}(u)$ and we have for $u\in[0,\bar u)$
\begin{align*}
\P(A\le u) & =1- F(u)\\
\P({\cal G}(A-)=\mathbf{t}\bigm|A=u) & = M(\hat u,\mathbf{t})\P({\cal
  G}(\hat u)=\mathbf{t})
\end{align*}
with $\hat u=u F(u)$. These results are quite similar with those of
Lemma 22 of \cite{[AP98]}. They must also be compared to the
continuous framework, Theorem 6.5, Theorem
6.7 of \cite{[AD10]}.

When we have $p_0^{(\bar u)}=0$, then the final tree $\mathcal{G}(\bar
u)$ is a.s. infinite. We consider the tree ${\cal G}^*(1)$ which is distributed as the tree
$\mathcal{G}(1)$  conditioned on non-extinction. From
this tree, by the same pruning procedure, we construct a
non-decreasing tree-valued process $({\cal G}^*(u),0\le u\le 1)$. We
then prove the following representation formula (Proposition \ref{PropRepA}):
$$({\cal G}(u),\  0\leq
u<A)\overset{d}{=} ({\cal G}^*(u \gamma),\ 0\leq
u<\bar{F}^{-1}(1-\gamma)),$$
where $\gamma$ is a r.v, uniformly distributed on $(0,1)$,
independent of $\{{\cal G}^{*}(\alpha):0\leq \alpha\leq1\}$ and $\bar F(u)=1-F(u)$. This
result must also be compared to a similar result in \cite{[AP98]},
Proposition 26:
$$({\cal G}^{AP}(u),\ 0\leq
u<A)\overset{d}{=} \left({\cal G}^{AP*}(u \gamma),\ 0\leq
u<\frac{-\log\gamma}{(1-\gamma)}\right),$$
or to its continuous analogue, Corollary 8.2 of \cite{[AD10]}.

Let us stress again that, although the results are very similar, those
in \cite{[AP98]} only hold for Poisson-Galton-Watson trees whereas the
results presented here hold for any offspring distribution.

The paper is organized as follows. In the next section, we recall some
notation for trees and define the pruning procedure at nodes. In
Section \ref{sec:processes}, we define the processes ${\cal G}$ and
${\cal G^*}$ and in Section \ref{sec:representation} we state and
prove the main results of the paper. Finally, we prove Lemma
\ref{lem:leaves} in
Section \ref{sec:appendix} .

\section{Trees and Pruning}

\subsection{Notation for Trees}

We present the framework developed in \cite{[Ne86]} for trees, see
also \cite{[LeG05]} or \cite{[AP98]} for more notation and terminology. Introduce the set of labels
$${\cal W}=\bigcup_{n=0}^{\infty}(\mathbb{N}^*)^n,
$$
where $\mbb{N}^*=\{1,2,\ldots\}$ and by convention
$(\mbb{N}^*)^0=\{\emptyset\}.$

An element of $\cal{W}$ is thus a sequence $w = (w^1, \ldots , w^n)$
of elements of $\mbb{N}$, and we set $|w|=n$, so that $|w|$
represents the generation of $w$ or the height of $w$. If $w = (w^1,
\ldots, w^m)$ and $v = (v^1,\ldots, v^n)$ belong to $\cal W$, we
write $wv = (w^1,\ldots,w^m, v^1,\ldots, v^n)$ for the concatenation
of $w$ and $v$. In particular $w\emptyset =\emptyset w = w$. The
mapping $\pi:{\cal W}\setminus\{\emptyset\}\longrightarrow{\cal W}$
is defined by $\pi((w^1,\ldots,w^n)) = (w^1,\ldots,w^{n-1})$ if $n\ge
1$ and $\pi((w^1))=\emptyset$, and we
say that $\pi(w)$
is the father of $w$. We set $\pi^0(w)=w$ and
$\pi^n(w)=\pi^{n-1}(\pi(w))$ for $1\leq n\leq |w|$. In particular,
$\pi^{|w|}(w)=\emptyset.$

A (finite or infinite) rooted ordered tree $\mathbf t$ is a subset of
$\cal W$ such that
\begin{enumerate}
\item $\emptyset\in {\mathbf t}.$

\item $w\in {\mathbf t}\setminus\{\emptyset\}\Longrightarrow\pi(w)\in {\mathbf t}.$

\item For every $w\in {\mathbf t}$, there exists a finite integer $k_{w}{\mathbf t}\geq
0$ such that, for every $j\in \mbb{N}$, $wj\in {\mathbf t}$ if and only if $0\leq
j\leq k_w\mathbf t$ ($k_w{\mathbf t}$ is the number of children of $w\in{\mathbf
t}$).
\end{enumerate}

Let ${\mathbf T}^{\infty}$ denote the set of all such trees $\mathbf t$.
Given a tree $\mathbf t$, we call an element in the set ${\mathbf
t}\subset{\cal W}$ a node of ${\mathbf t}$.  Denote the height of a tree
$\mathbf t$ by $|{\mathbf t}|:=\max\{|\nu|:\nu\in{\mathbf t}\}$. For $h\geq0$,
there exists a natural restriction map $r_h: {\mathbf T}^{\infty}\rar {\mathbf
T}^{h}$ such that $r_h{\mathbf t}=\{\nu\in{\mathbf t}:|\nu|\leq h\}$, where
${\mathbf T}^{h}:=\{\mathbf{t}\in \mathbf{T}^{\infty}: |{\mathbf
  t}|\leq h\}$. In particular, ${\mathbf
T}^0=\{\emptyset\}$.

We  denote by $\#{\mathbf t}$ the number of nodes of  ${\mathbf
t}$. Let
$$
{\mathbf T}:=\{{\mathbf t}\in{\mathbf T}^{\infty}: \#{\mathbf t}<\infty \}
$$
be the set of all finite trees. Then ${\mathbf
T}=\cup_{h=1}^{\infty}{\mathbf T}^h$.

We define the shifted subtree of
${\mathbf t}$ above $\nu$  by
$$
T_{\nu}{\mathbf t}:=\{w: \nu w\in{\mathbf t}\}.
$$

For $n\geq0$, let $\text{gen}(n, {\mathbf t})$ be the
$n$th generation of individuals in ${\mathbf t}$. That is
$$\text{gen}(n, {\mathbf t}):=\{\nu\in{\mathbf t}: |\nu|=n\}.
$$

 We say that $w\in{\mathbf t}$ is a
leaf of ${\mathbf t}$ if $k_w{\mathbf t}=0$ and set
$${\cal L}({\mathbf t}):=\{w\in{\mathbf t}: k_w{\mathbf t}=0\}.$$ So ${\cal L}({\mathbf t})$  denotes
the set of leaves of ${\mathbf t}$ and  $\#{\cal L}(\mathbf t)$ is the
number of leaves of ${\mathbf t}$.

We say that $w\in\mathbf{t}$ is an inner node of $\mathbf{t}$ if it is
not a leaf (i.e. $k_w\mathbf{t}>0$) and we denote by $\mathbf{t}^i$
the set of inner nodes of $\mathbf{t}$ i.e.
$$\mathbf{t}^i=\mathbf{t}\setminus\mathcal{L}(\mathbf{t}).$$

Given a probability distribution $p=\{p_n,\ n=0,1,\ldots\}$ with
$p_1<1$, following \cite{[AP98]}, call a random tree ${\cal G}_p$ a Galton-Watson tree with offspring distribution
$p$ if the number of children
 of $\emptyset$ has distribution $p$:
$$
\P(k_{\emptyset}{\cal G}_p=n)=p_n,\quad \forall\, n\geq0
$$
and for each $h=1,2,\ldots,$ conditionally given $r_h{\cal G}={\mathbf
t}^{h}\in {\mathbf T}^h$, for $\nu\in \mathrm{gen}(h,\mathbf{t}^h)$,
$k_{\nu}{\cal G}_p$ are i.i.d. random variables distributed according to $p$. That means
$$
\P(r_{h+1}{\cal G}={\mathbf t}\bigm|
r_h\mathcal{G}=r_h\mathbf{t})=\prod_{\nu\in r_h\mathbf{t}\setminus r_{h-1}\mathbf{t}}p_{{k_{\nu}{\mathbf
t}}},\quad {\mathbf t}\in {\mathbf T}^{h+1},
$$
where the product is over all nodes $\nu$ of $\mathbf t$ of height $h$. We
have then 
\bgeqn\label{ForGuT} 
\P({\cal G}={\mathbf
t})=\prod_{\nu\in{\mathbf t}}p_{k_{\nu}\mathbf{t}},\quad \mathbf{t}\in \mathbf{T},
\edeqn 
where the product is over all nodes $\nu$ of $\mathbf t$.

\subsection{Pruning at Nodes}
Let $\cal T$ be a tree in $\mathbf{T}^\infty$. For $0\leq u\leq 1$, a random tree
${\cal T}(u)$ is called a \textsl{node pruning} of ${\cal T}$ with
parameter $u$ if it
is constructed as follows: conditionally given ${\cal T}={\mathbf t}, \mathbf{t}\in
\mathbf{T}^{\infty}$, for $0\leq u\leq 1$, we consider a family of independent random
variables ($\xi^u_{\nu},\nu\in \mathbf{t})$ such that
$$P(\xi^u_{\nu}=1)=1-P(\xi^u_{\nu}=0)=\begin{cases}
 u^{k_{\nu}{\mathbf t}-1}, & \mbox{if }k_{\nu}{\mathbf t}\geq1,\\
 1, & \mbox{if }k_{\nu}{\mathbf t}=0,
 \end{cases}$$
and define \bgeqn\label{DefLu}{\cal
T}(u):=\{\emptyset\}\bigcup\left\{ \nu\in{\mathbf t}\setminus
\{\emptyset\}:\prod_{n=1}^{|\nu|}\xi_{\pi^n(\nu)}^u=1\right\}.\edeqn
This means that if a node $\nu$ belongs to ${\cal T}(u)$ and
$\xi_{\nu}^u=1$, then $\nu j, j=0,1,\ldots, k_{\nu}(\mathbf t)$ all
belong to ${\cal T}(u)$ and if $\xi_{\nu}^u=0$, then all  subsequent
offsprings of $\nu$ will be removed with the subtrees attached to
these nodes. Thus ${\cal T}(u)$ is a random
tree, ${\cal T}(u)\subset{\cal T}$ and we have for every $\nu\in \mathcal{W}$,
 \bgeqn\label{Foroffnu}
 \P(k_{\nu}{\cal T}(u)=n\bigm|\nu\in {\cal T}(u), \mathcal{T}=\mathbf{t})
=\begin{cases}
 u^{n-1}1_{\{k_{\nu}{\mathbf t}=n\}}, \quad &n\geq1,\\
 1_{\{k_{\nu}{\mathbf t}=0\}}+(1-u^{k_{\nu}{\mathbf t}-1})1_{\{k_{\nu}{\mathbf t}\geq1\}}, \quad &n=0.\end{cases}
  \edeqn
We also have that for $h\geq1$ and ${\mathbf t\in \mathbf{T}}^{\infty}$,
\begin{equation}
\label{Forsec} \P(r_{h}{\cal T}(u)=r_{h}{\mathbf t}\bigm|{\cal T}={\mathbf
t})=\P\left(\prod_{\nu\in n(h,{\mathbf
t})}\xi_{\nu}^u=1\right)=u^{\sum_{\nu\in
n(h, {\mathbf t})}(k_{\nu}{\mathbf t}-1)},
\end{equation}
where $n(h, {\mathbf
t}):=\{\nu\in{\mathbf t}: k_{\nu}{\mathbf t}\geq1 \text{ and }|\nu|<h\}$.

If  ${\cal T}$ is a Galton-Watson tree, we have the following proposition.
\begin{proposition}\label{PropGW}
If ${\cal T}$ is a Galton-Watson tree with offspring distribution $\{p_n,
n\geq0\}$, then ${\cal T}(u)$ is also a Galton-Watson tree with offspring
distribution $\{p_n^{(u)}, n\geq0\}$ defined by (\ref{eq:def_pu_1})
and (\ref{eq:def_pu_2}).
\end{proposition}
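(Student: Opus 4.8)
The plan is to verify that $\mathcal{T}(u)$ satisfies the two defining properties of a Galton--Watson tree with offspring law $p^{(u)}$: the number of children of the root has distribution $p^{(u)}$, and, conditionally on that number, the subtrees hanging from the root are i.i.d. copies of $\mathcal{T}(u)$. Since a random element of $\mathbf{T}^\infty$ is determined by the laws of its restrictions $r_h$, it suffices to prove $r_h\mathcal{T}(u)\overset{d}{=}r_h\mathcal{G}_{p^{(u)}}$ for every $h$, which I would obtain by induction on $h$ once the branching step is in place; phrasing the argument through the restrictions also covers the case where $\mathcal{T}(u)$ is infinite.

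First I would compute the offspring law of the root by averaging the conditional law (\ref{Foroffnu}) over $k_\emptyset\mathcal{T}\sim p$. For $n\ge 1$, the event $\{k_\emptyset\mathcal{T}(u)=n\}$ forces $k_\emptyset\mathcal{T}=n$ and $\xi_\emptyset^u=1$, so $\P(k_\emptyset\mathcal{T}(u)=n)=u^{n-1}p_n=p_n^{(u)}$ by (\ref{eq:def_pu_1}). For $n=0$, the root is a leaf of $\mathcal{T}(u)$ either because $k_\emptyset\mathcal{T}=0$ or because $\xi_\emptyset^u=0$; summing these contributions over the discarded subtrees gives $p_0+\sum_{m\ge1}p_m(1-u^{m-1})=1-\sum_{m\ge1}p_m^{(u)}=p_0^{(u)}$, which is exactly (\ref{eq:def_pu_2}).

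Next I would establish the branching step. Conditioning on $\{k_\emptyset\mathcal{T}(u)=n\}$ with $n\ge1$, that is on $\xi_\emptyset^u=1$ and $k_\emptyset\mathcal{T}=n$, the branching property of $\mathcal{T}$ makes the subtrees $T_i\mathcal{T}$, $1\le i\le n$, i.i.d. Galton--Watson trees with offspring law $p$; moreover the pruning variables attached to distinct subtrees form disjoint independent families, independent of $\xi_\emptyset^u$. Hence the subtrees of $\mathcal{T}(u)$ above the root's children are the independent prunings $(T_i\mathcal{T})(u)$, and because the pruning rule is scale invariant each $(T_i\mathcal{T})(u)$ has the law of $\mathcal{T}(u)$. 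Feeding this into the induction, with the base case $r_1$ supplied by the first step and the inductive step applying the hypothesis to each of the $n$ i.i.d. subtrees, yields $r_{h+1}\mathcal{T}(u)\overset{d}{=}r_{h+1}\mathcal{G}_{p^{(u)}}$ and completes the proof.

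I expect the delicate point to be the $n=0$ computation together with closing the recursion without circularity: one must check that the entire discarded subtree above a pruned node, which may be infinite, is correctly integrated out so that a pruned node contributes exactly the weight $p_0^{(u)}$, and that the appealing but informal self-similarity statement ``pruning a subtree reproduces the law of $\mathcal{T}(u)$'' is made rigorous through the induction on height rather than simply asserted. The identity (\ref{Forsec}) offers an alternative, more computational route, namely summing $\P(\mathcal{T}=\mathbf{t})\,\P(r_h\mathcal{T}(u)=r_h\mathbf{s}\mid\mathcal{T}=\mathbf{t})$ over the admissible $\mathbf{t}$, but the same bookkeeping over discarded subtrees reappears there.
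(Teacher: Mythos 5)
Your proposal is correct and follows essentially the same route as the paper: the core of both arguments is the computation of the offspring law at a node via (\ref{Foroffnu}) (giving $u^{n-1}p_n$ for $n\ge 1$ and $p_0+\sum_{m\ge 1}(1-u^{m-1})p_m=p_0^{(u)}$ for $n=0$), combined with the conditional independence of the pruning marks $\xi^u_\nu$ given $\mathcal{T}$. The only difference is bookkeeping: the paper verifies its definition of a Galton--Watson tree directly, by conditioning on $r_h\mathcal{T}(u)$ and checking that the offspring counts of generation-$h$ nodes are i.i.d.\ with law $p^{(u)}$, whereas you decompose at the root into i.i.d.\ pruned subtrees and run an induction on the height of the restrictions --- two phrasings of the same verification.
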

\begin{proof} By (\ref{Foroffnu}),
$$
\P(k_{\emptyset}{\cal T}(u)=0)=\P({\cal
T}=\{\emptyset\})+\sum_{n=1}^{\infty}\left(1-u^{n-1}\right)\P(k_{\emptyset}{\cal
T}=n)=p_0+\sum_{n=1}^{\infty}\left(1-u^{n-1}\right)p_n,
$$
which is equal to $p_0^{(u)}$. For $n\geq1$,
$$
\P(k_{\emptyset}{\cal T}(u)=n)=u^{n-1}\P( k_{\emptyset}{\cal
T}=n)=u^{n-1}p_n.
$$
The fact that $\{\xi_{\nu}^u\}$ are, conditionally on $\mathcal{T}$  independent random variables,
gives that for each $h=1,2,\ldots,$ conditionally given $r_h{\cal
T}(u)={\mathbf t}^{h}\in {\mathbf T}^h$, for $\nu\in{\mathbf t}^{h}$ with
$|\nu|=h$, $k_{\nu}{\cal T}(u)$ are independent. Meanwhile, again by
(\ref{Foroffnu}), 
\begin{multline*}
\P(k_{\nu}{\cal T}(u)=n\bigm|r_h{\cal
T}(u)={\mathbf t}^h)\\\qquad=
\begin{cases}
 u^{n-1}\P(k_{\nu}{\cal T}=n)=  p_n^{(u)},&\mbox{if }n\geq1,\\
 \P(k_{\nu}{\cal T }=0)+\sum_{k\geq1}(1-u^{k-1})P(k_{\nu}{\cal T
 }=k)=p_0^{(u)},  &\mbox{if }n=0.\end{cases}
\end{multline*}
Then the desired result follows readily. \qed
\end{proof}

\section{ A Tree-valued Markov Process}\label{sec:processes}

\subsection{A tree-valued process given the terminal tree}

Let $\cal T$ be a tree in $\mathbf{T}^\infty$. We want to construct a ${\mathbf T}^{\infty}$-valued
stochastic process $\{{\cal T}(u): 0\leq u\leq 1\}$ such that
\begin{itemize}
\item ${\cal T}(1)={\cal T}$,
\item for every $0\le u_1<u_2\le 1$, ${\cal T}(u_1)$ is a node pruning of ${\cal
  T}(u_2)$ with pruning parameter $u_1/u_2$.
\end{itemize}

Recall that ${\cal T}^i$ is the set of the inner nodes of $\cal T$. Let $(\xi_\nu, \nu\in{\cal
  T}^i)$ be a family of independent random variables such that, for
every $\nu\in{\cal G}^i$,
$$\P(\xi_\nu\le u)=u^{k_\nu{\cal T}-1}.$$

Then, for every $u\in[0,1]$, we set
$${\cal T}(u)=\left\{ \nu\in {\cal T},\ \forall 1\le n\le |\nu|,\
\xi_{\pi^n(\nu)}\le u\right\}.$$

We call the process $({\cal T}(u),0\le u\le 1)$ a pruning
process associated with $\cal T$. Let us remark that, contrary to the
process of \cite{[AP98]}, the tree ${\cal T}(0)$ may not be reduced to the root as the
nodes with one offspring are never pruned. More precisely, if we
denote by $(1)^n$ the $n$-uple $(1,1,\ldots,1)\in(\N^*)^n$ with the
convention $(1)^0=\emptyset$, we have
$$\mathcal{T}(0)=\bigl\{(1)^n,\ n\le\sup\{k,\ \forall l<k,\
  k_{(1)^l}\mathcal{G}=1\} \bigr\}$$
with the convention $\sup\emptyset=0$.

We deduce from Formula (\ref{Forsec}) the following proposition:
\begin{proposition}\label{Proplim}
We have that
 \bgeqn \label{Proplim1} \lim_{u\rar1}{\cal T}(u)={\cal T},\quad
a.s.,\edeqn where the limit means that for almost every $\omega$ in
the basic probability space, for each $h$ there exists a
$u(h,\omega)<1$ such  that $r_h {\cal T}(u,\omega)=r_h{\cal
T}(\omega)$ for all $u(h,\omega)<u\leq1$.
\end{proposition}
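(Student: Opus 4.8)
The plan is to reduce the almost-sure statement to a Borel--Cantelli argument applied level by level. Fix a height $h\ge 1$. By Formula~(\ref{Forsec}), conditionally on $\{{\cal T}={\mathbf t}\}$ the event $\{r_h{\cal T}(u)=r_h{\mathbf t}\}$ has probability $u^{\sum_{\nu\in n(h,{\mathbf t})}(k_\nu{\mathbf t}-1)}$, where the exponent is the \emph{finite} integer $S_h({\mathbf t}):=\sum_{\nu\in n(h,{\mathbf t})}(k_\nu{\mathbf t}-1)$ (finite because $n(h,{\mathbf t})$ involves only inner nodes of height $<h$, of which there are finitely many once $r_h{\mathbf t}$ is fixed, each with finite offspring number). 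The key observation is that, in the explicit construction via the variables $(\xi_\nu)$, the event $\{r_h{\cal T}(u)=r_h{\cal T}\}$ is exactly $\{\xi_\nu\le u\text{ for all }\nu\in n(h,{\cal T})\}$, and since $n(h,{\cal T})$ is a finite set of inner nodes, this event is \emph{monotone increasing in }$u$ and holds as soon as $u>\max\{\xi_\nu:\nu\in n(h,{\cal T})\}=:u(h,\omega)$.

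First I would make this deterministic structure precise: for almost every $\omega$, each $\xi_\nu(\omega)$ lies in $[0,1)$ (since $\P(\xi_\nu=1)=\lim_{u\to 1}u^{k_\nu{\cal T}-1}-\text{(left limit)}=0$ whenever $k_\nu{\cal T}\ge 1$, the distribution function $u\mapsto u^{k_\nu{\cal T}-1}$ having no atom at $1$). Hence on a set of full probability, for every finite collection of nodes the maximum of the corresponding $\xi_\nu$ is strictly less than $1$. Applying this to the finite set $n(h,{\cal T}(\omega))$ yields a threshold $u(h,\omega)<1$ such that $\xi_{\nu}(\omega)\le u$ for all $\nu\in n(h,{\cal T})$ and all $u\in(u(h,\omega),1]$; by the defining relation~(\ref{DefLu}) this gives $r_h{\cal T}(u,\omega)=r_h{\cal T}(\omega)$ for all such $u$, which is precisely the claimed mode of convergence.

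The only subtlety is the passage from ``each fixed $h$'' to the simultaneous statement: I would intersect the countably many full-probability events indexed by $h\in\dfN$, whose intersection still has full probability, so that on a single good set $\Omega_0$ with $\P(\Omega_0)=1$ the threshold $u(h,\omega)$ exists for every $h$ at once. Finally, since ${\cal T}$ may be infinite, I must check that ``$r_h{\cal T}(u)=r_h{\cal T}$ for all $h$ and $u$ near $1$'' is the correct meaning of $\lim_{u\to 1}{\cal T}(u)={\cal T}$ in $\mathbf T^\infty$; this is exactly the formulation already given in the statement, so no additional topology needs to be invoked. The main obstacle, such as it is, is purely bookkeeping: verifying that $n(h,{\cal T})$ is almost surely finite and that the $\xi_\nu$ have no atom at $1$, both of which follow directly from the definitions of the construction and of a Galton--Watson tree.
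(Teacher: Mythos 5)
Your proof is correct and takes essentially the same route as the paper, whose entire argument is the remark that the proposition follows from Formula (\ref{Forsec}): since every tree in $\mathbf{T}^\infty$ is locally finite, $n(h,{\cal T})$ is a finite set, and the coupled construction via the variables $\xi_\nu$ makes the events $\{r_h{\cal T}(u)=r_h{\cal T}\}$ monotone in $u$. Your explicit threshold $u(h,\omega)=\max\{\xi_\nu:\nu\in n(h,{\cal T})\}<1$ a.s., followed by intersection over the countably many $h$, is precisely the deduction the paper leaves implicit (note only that the Borel--Cantelli argument you announce at the outset is never actually needed nor used).
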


\subsection{Pruning Galton-Watson trees}\label{SsGW}

Let $p=\{p_n,\ n=0,1,\ldots\}$ be an offspring distribution. Let $\cal G$ be a Galton-Watson
tree with offspring distribution $p$. Then we consider the process
$({\cal G}(u),0\le u\le 1)$ such that, conditionally on $\cal G$,
the process is a pruning process associated with $\cal G$.

 Then for each $u\in [0,1]$,
 ${\cal G}(u)$ is a Galton-Watson tree with offspring distribution
$p^{(u)}$. Let $g(s)$ denote the generating function of $p$. Then the
distribution of ${\cal G}(u)$ is determined by the following
generating function
 \bgeqn\label{genu}
g_u(s)=1-g(u)/u+g(us)/u,\quad 0< u\leq 1.
 \edeqn

\subsection{Forward transition probabilities}

Let ${\cal L}(u)$ be the set of  leaves of ${\cal G}(u)$.  Fix
$\alpha$ and $\beta$ with $0\leq\alpha\leq\beta\leq 1$. Let us define
$$p_{\alpha,\beta}(k)=
\frac{(1-({\alpha}/{\beta})^{k-1})p^{(\beta)}_k}{p_0^{(\alpha)}}\quad
\text{for } k\geq1\quad \text{and }\quad
p_{\alpha,\beta}(0)=\frac{p_0^{(\beta)}}{p_0^{(\alpha)}}\cdot
$$

We define a modified Galton-Watson tree in which the size of the first
generation has distribution $p_{\alpha,\beta}$, while these
and all subsequent individuals have offspring distribution
$p^{(\beta)}$. More precisely, let $N$ be a random variable with law
$p_{\alpha,\beta}$ and let $(\mathcal{T}_k,k\in\N^*)$ be a sequence of
i.i.d. Galton-Watson trees with offspring distribution $p^{(\beta)}$
independent of $N$. Then we define the modified Galton-Watson tree
$\mathcal{G}_{\alpha,\beta}$ by
\begin{equation}\label{eq:def_modified}
\mathcal{G}_{\alpha,\beta}=\{\emptyset\}\cup \{k,1\le k\le N\}\cup
\bigcup_{k=1}^N\{kw,\ w\in \mathcal{T}_k\}.
\end{equation}

Let $(\mathcal{G}_{\alpha,\beta}^\nu,\nu\in\mathcal{L}(\alpha))$ be,
conditionally given $\mathcal{G}(\alpha)$, i.i.d. modified
Galton-Watson trees. We set
\begin{equation}\label{eq:defghat}
\hat{{\cal G}}({\beta})={\cal G}({\alpha})\cup\bigcup_{\nu\in{\cal
L}(\alpha)}\{\nu w:w\in {\cal G}_{\alpha, \beta}^{\nu} \}.
\end{equation}
That is $\hat{{\cal G}}({\beta})$ is a random tree obtained by
 adding a modified Galton-Watson tree ${\cal G}_{\alpha,\beta}^{\nu}$
  on each leaf $\nu$ of ${\cal G}(\alpha)$.
The following proposition, which implies the Markov property of
$\{{\cal G}(u),\  0\leq u\leq 1\}$, describes the transition
 probabilities of that tree-valued process.

\begin{proposition}\label{PropMK}
For every $0\le\alpha\le\beta\le 1$, $({\cal G}(\alpha),
{\cal G}(\beta))\overset{d}{=}({\cal G}(\alpha), \hat{{\cal
G}}(\beta))$.
\end{proposition}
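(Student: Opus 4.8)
The plan is to prove this by computing the distribution of $\hat{\mathcal{G}}(\beta)$ explicitly and matching it against the known joint law of $(\mathcal{G}(\alpha),\mathcal{G}(\beta))$. First I would observe that since $\mathcal{G}(\alpha)$ is obtained from $\mathcal{G}(\beta)$ by a node pruning with parameter $\alpha/\beta$, the pair $(\mathcal{G}(\alpha),\mathcal{G}(\beta))$ is determined by the law of $\mathcal{G}(\beta)$ (a Galton-Watson tree with offspring distribution $p^{(\beta)}$ by Proposition \ref{PropGW}) together with the conditional law of $\mathcal{G}(\beta)$ given $\mathcal{G}(\alpha)$. So it suffices to show that the construction in \eqref{eq:defghat} reproduces exactly this conditional law: that is, conditionally on $\mathcal{G}(\alpha)=\mathbf{s}$, the tree $\hat{\mathcal{G}}(\beta)$ has the same law as $\mathcal{G}(\beta)$ conditioned on its pruning at parameter $\alpha/\beta$ equalling $\mathbf{s}$.

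Since $\hat{\mathcal{G}}(\beta)$ agrees with $\mathcal{G}(\alpha)$ on its first part and attaches independent copies of $\mathcal{G}_{\alpha,\beta}$ at each leaf, the strategy reduces to a single-leaf computation combined with the tree-multiplicativity of the Galton-Watson law \eqref{ForGuT}. The key local identity I would establish is this: if $\nu$ is a leaf of the pruned tree $\mathcal{G}(\alpha)$, then conditionally on this event, the subtree $T_\nu\mathcal{G}(\beta)$ of the finer tree rooted at $\nu$ is distributed exactly as $\mathcal{G}_{\alpha,\beta}$. To see why, I would fix a target tree $\mathbf{t}$ and compute $\P(\hat{\mathcal{G}}(\beta)=\mathbf{t})$ by summing the construction over all $\mathbf{s}$ that can arise as $\mathcal{G}(\alpha)$. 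Using \eqref{ForGuT} applied to $p^{(\beta)}$ for each factor, the contribution of a node $\nu$ that survives to $\mathcal{G}(\alpha)$ with $k_\nu\mathbf{t}=k\geq 1$ children splits according to whether $\nu$ is an inner node of $\mathbf{s}$ (pruning variable $\xi_\nu^{\alpha/\beta}=1$, weight $(\alpha/\beta)^{k-1}$) or a leaf of $\mathbf{s}$ (the node was cut, but here we are in the regime where the modified first-generation law $p_{\alpha,\beta}$ takes over). The definitions of $p_{\alpha,\beta}(k)$ and $p_{\alpha,\beta}(0)$ are rigged precisely so that these weights, multiplied by the appropriate $p^{(\beta)}$ factors, reassemble into the correct product.

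Concretely, I would verify the single-leaf marginal: for $k\geq 1$,
\begin{equation*}
p_{\alpha,\beta}(k)\,\P(\mathcal{T}_1=\mathbf{t}_1)\cdots\P(\mathcal{T}_k=\mathbf{t}_k)
=\frac{1-(\alpha/\beta)^{k-1}}{p_0^{(\alpha)}}\,p^{(\beta)}_k\prod_{j=1}^k\P(\mathcal{G}_{p^{(\beta)}}=\mathbf{t}_j),
\end{equation*}
and check that the factor $1-(\alpha/\beta)^{k-1}$ is exactly the probability that a node with $k$ children present in $\mathcal{G}(\beta)$ gets cut in passing to $\mathcal{G}(\alpha)$, namely $1-\P(\xi^{\alpha/\beta}_\nu=1)$. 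Summing the $k=0$ case against $p_{\alpha,\beta}(0)=p_0^{(\beta)}/p_0^{(\alpha)}$ and multiplying across all leaves of $\mathbf{s}$, the normalizing denominators $p_0^{(\alpha)}$ cancel against the leaf-factors of $\P(\mathcal{G}(\alpha)=\mathbf{s})$, which carry $p_0^{(\alpha)}$ for each leaf via \eqref{ForGuT}. The telescoping then yields $\P(\mathcal{G}(\alpha)=\mathbf{s})\,\P(\hat{\mathcal{G}}(\beta)=\mathbf{t}\mid \mathcal{G}(\alpha)=\mathbf{s})=\P(\mathcal{G}(\beta)=\mathbf{t},\,\mathcal{G}(\alpha)=\mathbf{s})$ as desired.

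The main obstacle I anticipate is purely bookkeeping: correctly tracking how each node of the target tree $\mathbf{t}$ is classified relative to $\mathbf{s}$ (inner node of $\mathbf{s}$, leaf of $\mathbf{s}$ lying in the attached modified trees, or leaf of $\mathbf{s}$ that is genuinely a leaf of $\mathbf{t}$), and confirming that the weight $u^{k-1}$ from the pruning \eqref{Forsec} together with the $1-(\alpha/\beta)^{k-1}$ cutting probability and the modified law $p_{\alpha,\beta}$ partition the full $p^{(\beta)}$-weight of $\mathbf{t}$ with no double-counting. Once the single-leaf identity is nailed down, the extension to the whole tree is immediate from the independence of the attached trees and the product form of the Galton-Watson law.
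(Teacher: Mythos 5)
Your core computation is the same as the paper's: both arguments reduce to checking that, conditionally on $\mathcal{G}(\alpha)=\mathbf{s}$, the subtree of $\mathcal{G}(\beta)$ above each leaf of $\mathbf{s}$ is an independent copy of $\mathcal{G}_{\alpha,\beta}$, and the identities you isolate --- $p_0^{(\alpha)}p_{\alpha,\beta}(k)=\bigl(1-(\alpha/\beta)^{k-1}\bigr)p_k^{(\beta)}$ for $k\ge 1$, $p_0^{(\alpha)}p_{\alpha,\beta}(0)=p_0^{(\beta)}$, together with $p_k^{(\alpha)}=(\alpha/\beta)^{k-1}p_k^{(\beta)}$ at the surviving inner nodes --- are exactly the rearrangement carried out in the paper's chain of equalities. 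Your single-leaf factorization, plus the (correct, and worth stating explicitly) observation that every node of $\mathcal{G}(\beta)\setminus\mathcal{G}(\alpha)$ lies above a leaf of $\mathcal{G}(\alpha)$ because pruning removes all children of a cut node at once, leads to precisely the joint-law identity the paper proves.

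There is, however, one genuine gap, though it is easily repaired: you work throughout with point masses $\P(\hat{\mathcal{G}}(\beta)=\mathbf{t})$ and $\P(\mathcal{G}(\alpha)=\mathbf{s},\,\mathcal{G}(\beta)=\mathbf{t})$ at \emph{finite} trees. The proposition makes no criticality assumption on $p$ (none is made in Section 3, and the construction is later rescaled and used in the supercritical regime of Section 4), so $\mathcal{G}(\beta)$, and hence $\hat{\mathcal{G}}(\beta)$, may be infinite with positive probability. In that case, equality of the point masses at finite trees does not determine equality of the joint laws: it says nothing about how the mass carried by infinite trees is distributed. This is exactly why the paper's proof is phrased in terms of the height restrictions $r_h$: it establishes $\P(r_h\mathcal{G}(\alpha)=\mathbf{s},\,r_h\mathcal{G}(\beta)=\mathbf{t})=\P(r_h\mathcal{G}(\alpha)=\mathbf{s},\,r_h\hat{\mathcal{G}}(\beta)=\mathbf{t})$ for every $h$ and every $\mathbf{s},\mathbf{t}\in\mathbf{T}^h$, and laws on $\mathbf{T}^\infty\times\mathbf{T}^\infty$ are characterized by these finite-height marginals. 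Your leaf-by-leaf bookkeeping goes through verbatim with $r_h$ inserted (nodes of $\mathbf{t}$ at height $h$ simply contribute no offspring factor, which is why the paper's products run over $r_{h-1}\mathbf{t}$ and over leaves of $\mathbf{s}$ of height $<h$), so the fix is routine; but as stated, your argument only proves the proposition when the trees are almost surely finite.
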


\begin{proof} 
Let $\alpha<\beta$, let $h\in\N^*$ and let $\mathbf{s}$ and $\mathbf{t}$
be two trees of $\mathbf{T}^h$
such that $\mathbf{s}$ can be obtained from $\mathbf{t}$ by pruning
at nodes. Then, by definition of the pruning procedure, we have
\begin{align*}
\P(r_h\mathcal{G}(\alpha)=\mathbf{s},r_h\mathcal{G}(\beta)=\mathbf{t}) &
=\prod_{\nu\in r_{h-1}\mathbf{t}}p_{k_\nu\mathbf{t}}^{(\beta)}\prod_{\nu\in\mathbf{s}^i}\left(\frac{\alpha}{\beta}\right)^{k_\nu\mathbf{t}-1}\prod_{\nu\in\mathcal{L}(\mathbf{s})\setminus\mathcal{L}(\mathbf{t})}\left(1-\left(\frac{\alpha}{\beta}\right)^{k_\nu\mathbf{t}-1}\right)\\
&=\prod_{\nu\in\mathbf{s}^i}p_{k_\nu\mathbf{t}}^{(\alpha)}\prod_{\nu\in
  r_{h-1}\mathbf{t}\setminus\mathbf{s}^i}p_{k_\nu\mathbf{t}}^{(\beta)}\prod_{\nu\in\mathcal{L}(\mathbf{s}),
|\nu|<h}\left(1-\left(\frac{\alpha}{\beta}\right)^{k_\nu\mathbf{t}-1}\mathbf{1}_{\{k_\nu\mathbf{t}>0\}}\right)\\
&=\prod_{\nu\in\mathbf{s}^i}p_{k_\nu\mathbf{t}}^{(\alpha)}\prod_{\nu\in
  r_{h-1}\mathbf{t}\setminus\mathbf{s}}p_{k_\nu\mathbf{t}}^{(\beta)}\prod_{\nu\in\mathcal{L}(\mathbf{s}),|\nu|<h}p_{k_\nu\mathbf{t}}^{(\beta)}\left(1-\left(\frac{\alpha}{\beta}\right)^{k_\nu\mathbf{t}-1}\mathbf{1}_{\{k_\nu\mathbf{t}>0\}}\right)\\
&=\prod_{\nu\in r_{h-1}\mathbf{s}}p_{k_\nu\mathbf{s}}^{(\alpha)}\prod_{\nu\in r_{h-1}\mathbf{t}\setminus\mathbf{s}}p_{k_\nu\mathbf{t}}^{(\beta)}\prod_{\nu\in\mathcal{L}(\mathbf{s}),|\nu|<h}\frac{p_{k_\nu\mathbf{t}}^{(\beta)}}{p_{0}^{(\alpha)}}\left(1-\left(\frac{\alpha}{\beta}\right)^{k_\nu\mathbf{t}-1}\mathbf{1}_{\{k_\nu\mathbf{t}>0\}}\right)\\
&=\prod_{\nu\in
  r_{h-1}\mathbf{s}}p_{k_\nu\mathbf{s}}^{(\alpha)}\prod_{\nu\in r_{h-1}\mathbf{t}\setminus\mathbf{s}}p_{k_\nu\mathbf{t}}^{(\beta)}\prod_{\nu\in\mathcal{L}(\mathbf{s}),|\nu|<h}p_{\alpha,\beta}(k_\nu\mathbf{t}).
\end{align*}
The definition of $\hat{\mathcal G}(\beta)$  readily
implies
$$\P(r_h\mathcal{G}(\alpha)=\mathbf{s},r_h\hat{\mathcal{G}}(\beta)=\mathbf{t})=\prod_{\nu\in
  r_{h-1}\mathbf{s}}p_{k_\nu\mathbf{s}}^{(\alpha)}\prod_{\nu\in r_{h-1}\mathbf{t}\setminus\mathbf{s}}p_{k_\nu\mathbf{t}}^{(\beta)}\prod_{\nu\in\mathcal{L}(\mathbf{s}),|\nu|<h}p_{\alpha,\beta}(k_\nu\mathbf{t})$$
which ends the proof.
\qed
\end{proof}

 Let $\#{\cal L}(u)$ denote the number of leaves of ${\cal
G}(u)$. 
The latter proposition together with the description of $\hat{\cal G}$  readily imply
  \bgeqn
 \label{PropMK1}
({\cal G}(\alpha), \#{\cal L}(\beta))\overset{d}{=}\left({\cal
G}(\alpha), \sum_{\nu\in{\cal L}(\alpha)}\#{\cal L}({\cal
G}_{\alpha,\beta}^{\nu})\right).
 \edeqn

We can also describe the forward transition rates when the trees are finite. If $\mathbf{s}$
and $\mathbf{t}$ are two trees and if $\nu\in\mathcal{L}(\mathbf{s})$,
we define the tree obtained by grafting $\mathbf{t}$ on $\nu$ by
$$\mathbf{r}(\mathbf{s},\nu;\mathbf{t}):=\mathbf{s}\cup\{\nu w,
w\in\mathbf{t}\}.$$

We also define, for $s\in \mathbf{T}$, $\nu\in\mathcal{L}(\mathbf{s})$
and $k\in\N^*$, the set of trees
$$\mathbf{r}(\mathbf{s},\nu;\mathbf{t}_k(\infty)):=\{\mathbf{r}(\mathbf{s},\nu;\mathbf{t}),\ k_\emptyset\mathbf{t}=k,\ \#\mathbf{t}=\infty\}$$

\begin{corollary}
Let $\mathbf{s}\in\mathbf{T}$, $\mathbf{t}\in
\mathbf{T}$, $\mathbf{t}\ne \{\emptyset\}$ and let
$\nu\in\mathcal{L}(\mathbf{s})$. Then the transition rate at time $u$
from $\mathbf{s}$ to ${\bf r}({\bf s},\nu;{\bf t})$ is
  given by
\begin{equation}\label{fintran}
q_{u}({\bf
s}\rar{\bf r}({\bf s},\nu;{\bf t})):=\frac{k_\emptyset\mathbf{t}-1}{u}\frac{\P({\cal
G}(u)={\bf t})}{p_0^{(u)}}.
\end{equation}
Let $\mathbf{s}\in\mathbf{T}$,
$\nu\in\mathcal{L}(\mathbf{s})$ and $k \ge 1$. Then the transition rate at time $u$
from $\mathbf{s}$ to the set
$\mathbf{r}(\mathbf{s},\nu;\mathbf{t}_k(\infty))$ is
\begin{equation}\label{fintran2}
q_{u}({\bf
s}\rar\mathbf{r}(\mathbf{s},\nu;\mathbf{t}_k(\infty)):=\frac{k-1}{u}\frac{1-F(u)^k}{p_0^{(u)}}p_k^{(u)},
\end {equation}
and no other transitions are allowed.
\end{corollary}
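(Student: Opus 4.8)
The plan is to derive the transition rates by differentiating the transition probabilities computed in Proposition~\ref{PropMK}. Recall that a transition rate is a time-derivative of the infinitesimal transition probability: I must compute $q_u(\mathbf{s}\to \cdot)=\lim_{\beta\downarrow u}\frac{1}{\beta-u}\P(\mathcal{G}(\beta)=\cdot\mid\mathcal{G}(u)=\mathbf{s})$ for the relevant target configurations. Since $\mathcal{G}(u)$ is a non-decreasing tree-valued Markov process, moving from time $u=\alpha$ to a slightly larger time $\beta$ can only grow the tree by grafting new subtrees at the current leaves. By \eqref{eq:defghat}, the growth at each leaf $\nu\in\mathcal{L}(\mathbf{s})$ is governed by an independent modified Galton-Watson tree $\mathcal{G}_{\alpha,\beta}^\nu$ whose root-offspring law is $p_{\alpha,\beta}$. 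The key observation is that $p_{\alpha,\beta}(0)=p_0^{(\beta)}/p_0^{(\alpha)}\to 1$ as $\beta\downarrow\alpha$, so with overwhelming probability nothing happens at a given leaf; the rate is extracted from the first-order expansion of the probability that exactly one leaf is grafted a nontrivial subtree.

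For the finite-target rate \eqref{fintran}, I would fix a single leaf $\nu\in\mathcal{L}(\mathbf{s})$ and a finite tree $\mathbf{t}\neq\{\emptyset\}$ with $k_\emptyset\mathbf{t}=k$. The event that $\mathcal{G}(\beta)=\mathbf{r}(\mathbf{s},\nu;\mathbf{t})$ given $\mathcal{G}(\alpha)=\mathbf{s}$ requires that the modified tree at $\nu$ equals $\mathbf{t}$ and that every other leaf gets the trivial tree. Using the structure \eqref{eq:def_modified} the probability that $\mathcal{G}_{\alpha,\beta}^\nu=\mathbf{t}$ is $p_{\alpha,\beta}(k)\prod_{w\in\mathbf{t}, w\neq\emptyset}p^{(\beta)}_{k_w\mathbf{t}}$, which I recognize via \eqref{ForGuT} as $p_{\alpha,\beta}(k)\,\P(\mathcal{G}(\beta)=\mathbf{t})/p_k^{(\beta)}$. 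The leading behavior comes entirely from $p_{\alpha,\beta}(k)=\bigl(1-(\alpha/\beta)^{k-1}\bigr)p_k^{(\beta)}/p_0^{(\alpha)}$. Writing $\beta=\alpha+\epsilon$ and expanding $1-(\alpha/\beta)^{k-1}\sim (k-1)\epsilon/\alpha$ to first order, while noting that the remaining factors tend to their values at $u=\alpha$ and that the probability all other leaves stay trivial tends to $1$, I obtain after dividing by $\epsilon$ and letting $\epsilon\to0$ the rate $\frac{k-1}{u}\,\frac{\P(\mathcal{G}(u)=\mathbf{t})}{p_0^{(u)}}$, which is exactly \eqref{fintran}.

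For the infinite-target rate \eqref{fintran2}, the same expansion applies but now I must sum over all finite first-generation configurations leading to an infinite grafted subtree with a prescribed number $k$ of root-children. The probability that a modified tree $\mathcal{G}_{\alpha,\beta}^\nu$ has $k$ children at its root and is infinite factors as $p_{\alpha,\beta}(k)$ times the probability that at least one of the $k$ independent $\mathcal{G}(\beta)$-subtrees is infinite, namely $1-F(\beta)^k$ where $F$ is the extinction probability. Again the first-order term in $\epsilon$ comes only from $1-(\alpha/\beta)^{k-1}\sim(k-1)\epsilon/\alpha$, and passing to the limit gives $\frac{k-1}{u}\,\frac{1-F(u)^k}{p_0^{(u)}}\,p_k^{(u)}$, matching \eqref{fintran2}. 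Finally I would argue that no other transitions occur: since grafting at two distinct leaves simultaneously, or grafting a nontrivial tree while another leaf also changes, contributes a probability of order $\epsilon^2$, all such competing events have zero rate, so the only admissible jumps are single-leaf graftings of the two types described.

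The main obstacle is the bookkeeping in isolating the genuinely first-order contribution: I must verify that the probability that all leaves other than $\nu$ remain untouched converges to $1$ at rate $1+O(\epsilon)$ rather than contributing to the leading order, and that the simultaneous-grafting events are indeed $O(\epsilon^2)$. This requires controlling a product over $\#\mathcal{L}(\mathbf{s})$ leaves of factors each of the form $1-O(\epsilon)$; since $\mathbf{s}$ is finite this product is uniformly $1-O(\epsilon)$ and hence harmless, but the uniformity must be stated carefully so that the term-by-term limit and the normalization by $1/\epsilon$ interchange correctly.
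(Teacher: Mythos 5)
Your proposal is correct and takes essentially the same route as the paper's own proof: both differentiate the transition probabilities supplied by Proposition \ref{PropMK}, factor the law of the modified Galton-Watson tree through \eqref{ForGuT} so that $\P(\mathcal{G}_{u,u+du}^\nu=\mathbf{t})=\bigl(1-(u/(u+du))^{k_\emptyset\mathbf{t}-1}\bigr)\P(\mathcal{G}(u+du)=\mathbf{t})/p_0^{(u)}$, and extract the rate from the first-order expansion $1-(u/(u+du))^{k-1}\sim (k-1)\,du/u$, handling the infinite-graft case by replacing the subtree probability with $1-F(u)^k$ and dismissing multiple graftings as $o(du)$. The only differences are presentational (you work with conditional rather than joint probabilities and state the $O(du^2)$ bookkeeping more explicitly), not mathematical.
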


\begin{proof} Let $\mathbf{s}\in\mathbf{T}$, $\mathbf{t}\in
\mathbf{T}$, $\mathbf{t}\ne \{\emptyset\}$ and let
$\nu\in\mathcal{L}(\mathbf{s})$.
By Proposition \ref{PropMK}, we have
\begin{align*}
\P\left(\mathcal{G}(u)=\mathbf{s},\mathcal{G}(u+du)=\mathbf{r}(\mathbf{s},\nu;\mathbf{t})\right)
&
=\P\left(\mathcal{G}(u)=\mathbf{s},\hat{\mathcal{G}}(u+du)=\mathbf{r}(\mathbf{s},\nu;\mathbf{t})\right)\\
&
=\P\left(\mathcal{G}(u)=\mathbf{s}\right)\P(\mathcal{G}_{u,u+du}^\nu=\mathbf{t})\prod_{\tilde\nu\in\mathcal{L}(\mathbf{s})\setminus\{\nu\}}\P(\mathcal{G}_{u,u+du}^{\tilde\nu}=\{\emptyset\})\\
&=\P\left(\mathcal{G}(u)=\mathbf{s}\right)\P(\mathcal{G}_{u,u+du}^\nu=\mathbf{t})p_{u,u+du}(0)^{\#\mathcal{L}(\mathbf{s})-1}.
\end{align*}

Using (\ref{eq:def_modified}), we get
\begin{align*}
\P 
\Bigl(\mathcal{G}(u+du)=\mathbf{r}(\mathbf{s},\nu;\mathbf{t}) & \bigm|\mathcal{G}(u)=\mathbf{s}\Bigr)\\
&
=\P\left(\mathcal{G}(u+du)=\mathbf{t}\right)\frac{p_{u,u+du}(k_\emptyset\mathbf{t})}{p_{k_\emptyset\mathbf{t}}^{(u+du)}}p_{u,u+du}(0)^{\#\mathcal{L}(\mathbf{s})-1}\\
&
=\P\left(\mathcal{G}(u+du)=\mathbf{t}\right)\frac{1}{p_0^{(u)}}\left(1-\left(\frac{u}{u+du}\right)^{k_\emptyset\mathbf{t}-1}\right)\left(\frac{p_0^{(u+du)}}{p_0^{(u)}}\right)^{\#\mathcal{L}(\mathbf{s})-1}\\
&\underset{du\to
  0}{\sim}\P\left(\mathcal{G}(u)=\mathbf{t}\right)\frac{1}{p_0^{(u)}}(k_\emptyset\mathbf{t}-1)\frac{du}{u}\cdot
\end{align*}

This gives Formula (\ref{fintran}). 

A similar computation replacing $\P(\mathcal{G}(u)=\mathbf{t})$ by
$\P(\#\mathcal{G}(u)=\infty\bigm|k_\emptyset\mathcal{G}(u)=k)=1-F(u)^k$
gives Formula (\ref{fintran2}).

Another similar computation gives that,
if $\mathbf{t}$ is obtained by grafting two trees (or more) on the
leaves of $\mathbf{s}$,
$$\P\left(\mathcal{G}(u+du)=\mathbf{t}\bigm|\mathcal{G}(u)=\mathbf{s}\right)=o(du)$$
and it is clear by construction that, in all the other cases,
$$\P\left(\mathcal{G}(u+du)=\mathbf{t}\bigm|\mathcal{G}(u)=\mathbf{s}\right)=0.$$
\qed
 \end{proof}

Let us define 
\begin{equation}\label{eq:defmu}
\mu(u):=\sum_{k=1}^\infty kp_k^{(u)}
\end{equation}
if it exists the mean of
$p^{(u)}$. We set $u_1=\sup\{u\in[0,1],\ \mu(u)\leq 1\}$. Recall the
definition of function $M$ in (\ref{eq:defh}).

\begin{corollary}\label{CorL} The process
\bgeqn\label{CorL2} \left(M(u,\mathcal{G}(u)),\quad
0\leq u<u_1\right), \edeqn 
is a martingale with respect to the filtration
generated by $\{{\cal G}(u),\ 0\leq u< u_1\}$.
\end{corollary}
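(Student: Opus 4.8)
The plan is to establish the defining martingale identity
\[
\E\bigl[M(\beta,\mathcal{G}(\beta))\bigm|\mathcal{F}_\alpha\bigr]=M(\alpha,\mathcal{G}(\alpha)),\qquad 0\le\alpha\le\beta<u_1,
\]
where $\mathcal{F}_\alpha=\sigma(\mathcal{G}(\gamma),\ \gamma\le\alpha)$. Since $\{\mathcal{G}(u)\}$ is Markov (a consequence of Proposition \ref{PropMK}), it is enough to condition on $\mathcal{G}(\alpha)$ alone. As $M(\beta,\mathcal{G}(\beta))=\frac{(1-\mu(\beta))}{p_0^{(\beta)}}\#\mathcal{L}(\beta)$, the whole statement reduces to computing $\E[\#\mathcal{L}(\beta)\mid\mathcal{G}(\alpha)]$. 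For this I would invoke the identity in law (\ref{PropMK1}), which represents $\#\mathcal{L}(\beta)$, jointly with $\mathcal{G}(\alpha)$, as $\sum_{\nu\in\mathcal{L}(\alpha)}\#\mathcal{L}(\mathcal{G}_{\alpha,\beta}^\nu)$ with the $\mathcal{G}_{\alpha,\beta}^\nu$ i.i.d. copies of $\mathcal{G}_{\alpha,\beta}$ independent of $\mathcal{G}(\alpha)$. Taking conditional expectation gives
\[
\E\bigl[\#\mathcal{L}(\beta)\bigm|\mathcal{G}(\alpha)\bigr]=\#\mathcal{L}(\alpha)\,\E\bigl[\#\mathcal{L}(\mathcal{G}_{\alpha,\beta})\bigr],
\]
so everything hinges on the single number $\E[\#\mathcal{L}(\mathcal{G}_{\alpha,\beta})]$.

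To compute it I would first record the classical fact that a Galton--Watson tree $\mathcal{T}$ with offspring law $q$ of mean $m<1$ satisfies $\E[\#\mathcal{L}(\mathcal{T})]=q_0/(1-m)$: conditioning on the root degree gives the branching recursion $\E[\#\mathcal{L}(\mathcal{T})]=q_0+m\,\E[\#\mathcal{L}(\mathcal{T})]$. Applied to $q=p^{(\beta)}$, whose mean is $\mu(\beta)<1$ because $\beta<u_1$, this yields $\E[\#\mathcal{L}(\mathcal{T}_k)]=p_0^{(\beta)}/(1-\mu(\beta))$. Decomposing $\mathcal{G}_{\alpha,\beta}$ on its root degree $N\sim p_{\alpha,\beta}$ (the root is a leaf iff $N=0$, otherwise $\#\mathcal{L}(\mathcal{G}_{\alpha,\beta})=\sum_{k=1}^N\#\mathcal{L}(\mathcal{T}_k)$) then gives
\[
\E\bigl[\#\mathcal{L}(\mathcal{G}_{\alpha,\beta})\bigr]=p_{\alpha,\beta}(0)+\frac{p_0^{(\beta)}}{1-\mu(\beta)}\sum_{k\ge1}k\,p_{\alpha,\beta}(k).
\]

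The crux is then a short algebraic collapse, which I expect to be the only delicate point. Using $p_k^{(u)}=u^{k-1}p_k$ one checks the clean rewriting $p_0^{(\alpha)}p_{\alpha,\beta}(k)=p_k^{(\beta)}-p_k^{(\alpha)}$ for $k\ge1$ and $p_0^{(\alpha)}p_{\alpha,\beta}(0)=p_0^{(\beta)}$, whence $\sum_{k\ge1}k\,p_{\alpha,\beta}(k)=(\mu(\beta)-\mu(\alpha))/p_0^{(\alpha)}$ (all series converging since $\mu(\beta)<1$). Substituting and using the telescoping $1-\mu(\alpha)=(1-\mu(\beta))+(\mu(\beta)-\mu(\alpha))$ gives
\[
\E\bigl[\#\mathcal{L}(\mathcal{G}_{\alpha,\beta})\bigr]=\frac{p_0^{(\beta)}}{p_0^{(\alpha)}}\left(1+\frac{\mu(\beta)-\mu(\alpha)}{1-\mu(\beta)}\right)=\frac{p_0^{(\beta)}}{p_0^{(\alpha)}}\cdot\frac{1-\mu(\alpha)}{1-\mu(\beta)}.
\]
Plugging this into $\E[M(\beta,\mathcal{G}(\beta))\mid\mathcal{G}(\alpha)]=\frac{1-\mu(\beta)}{p_0^{(\beta)}}\#\mathcal{L}(\alpha)\,\E[\#\mathcal{L}(\mathcal{G}_{\alpha,\beta})]$, the factors $p_0^{(\beta)}$ and $1-\mu(\beta)$ cancel and by the definition (\ref{eq:defh}) one is left exactly with $\frac{1-\mu(\alpha)}{p_0^{(\alpha)}}\#\mathcal{L}(\alpha)=M(\alpha,\mathcal{G}(\alpha))$. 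Finally, integrability is immediate from the same leaf-mean formula with $q=p^{(u)}$: $\E[\#\mathcal{L}(u)]=p_0^{(u)}/(1-\mu(u))<\infty$ for $u<u_1$, where $\mu(u)=g'(u)<1$ by monotonicity of $g'$ (see (\ref{genu})), so each $M(u,\mathcal{G}(u))$ is integrable and the martingale property is complete.
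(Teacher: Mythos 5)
Your proof is correct and follows essentially the same route as the paper's: both reduce, via the Markov property and (\ref{PropMK1}), to computing $\E[\#\mathcal{L}(\mathcal{G}_{\alpha,\beta})]$, both derive $\E[\#\mathcal{L}(u)]=p_0^{(u)}/(1-\mu(u))$ from the branching recursion, and both use that the mean of $p_{\alpha,\beta}$ equals $(\mu(\beta)-\mu(\alpha))/p_0^{(\alpha)}$. The only difference is that you write out explicitly the telescoping algebra that the paper dismisses as ``a simple calculation.''
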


\begin{proof}
First, by the branching property of Galton-Watson process, for each
$n\geq1$, $0\leq u<u_1$ and $\ell\ge n$,
$$
\P\left(\#{\cal L}(u)=\ell\bigm|k_{\emptyset}{\cal
G}(u)=n\right)=\P\left(\sum_{i=1}^{n}L_i=\ell\right),
$$
where $L_1,L_2,\ldots$ are i.i.d. copies of $\#{\cal L}(u)$. This
gives $$\E\left[\#{\cal L}(u)\right]=p_0^{(u)}+\E\left[k_{\emptyset}{\cal G}(u)\right]\E\left[\#{\cal
L}(u)\right] $$ which implies
 \bgeqn\label{CorL1} \E\left[\#{\cal
     L}(u)\right]=\frac{p_0^{(u)}}{1-\mu(u)}\cdot \edeqn
A straightforward computation gives that the mean of the offspring
distribution $p_{\alpha,\beta}$ is
$$\mu_{\alpha,\beta}:=\frac{\mu(\beta)-\mu(\alpha)}{p_0^{(\alpha)}}.$$
By the same reasoning, (\ref{PropMK1}) and (\ref{eq:def_modified}) imply,
for $0\leq\alpha\leq\beta<u_1$,
\begin{align*}
\E\left[\#{\cal L}(\beta)\bigm|{\cal G}(\alpha)\right]=\#{\cal L}(\alpha)\E\left[\#{\cal
L}({\cal G}_{\alpha,\beta}^{\nu})\right] & =\#{\cal
L}(\alpha)\left(p_{\alpha,\beta}(0)+\mu_{\alpha,\beta}\E\left[\#{\cal
L}(\beta)\right]\right)\\
& =\#{\cal L}(\alpha)\left(p_{\alpha,\beta}(0)+\mu_{\alpha,\beta}\frac{p_0^{(\beta)}}{1-\mu(\beta)}\right)
\end{align*}
by (\ref{CorL1}).
Then the martingale property of (\ref{CorL2})
follows from a simple calculation. \qed

\end{proof}

\bigskip

\subsection{Pruning a Galton-Watson Tree Conditioned on Non-Extinction}

Let $p$ be a critical or sub-critical offspring distribution with mean
$\mu$ such that $p_0<1$.
We define the size-biased probability distribution $p^*$ of
$p$ by
$$
p^*_k=\frac{kp_k}{\mu},\quad k\geq0.
$$

Let $\cal G$ be a Galton-Watson tree  with
offspring distribution $p$. For a tree $\mathbf{t}$, we denote by $Z_n\mathbf{t}=\#\mathrm{gen}(n,\mathbf{t})$
the number of individuals in the $n$th generation of $\mathbf{t}$.
 We first recall a result in \cite{[K87]}.

\begin{proposition}\label{Propkesten}
(Kesten \cite{[K87]}, Aldous and Pitman \cite{[AP98]})

(i) The conditional distribution of $\mathcal{G}$ given
  $\{Z_n\mathcal{G}>0\}$ converges, as $n$ tends to $+\infty$,  toward the law of a random family tree ${\cal G}^{\infty}$ specified
by
$$
\P(r_h{\cal G}^{\infty}={\mathbf t})=\mu^{-h}(Z_h{\mathbf t})\P(r_h{\cal
G}={\mathbf t})\quad \forall\, {\mathbf t\in \mathbf{T}}^{(h)}, h\geq0.
$$

(ii) Almost surely ${\cal G}^{\infty}$ contains a unique infinite
path ($\emptyset=V_0, V_1, V_2,\ldots$) such that $\pi(V_{h+1})=V_h$ for every $h=0,1,2,\ldots$.

(iii) The joint distribution of $(V_0, V_1,
V_2,\ldots)$ and ${\cal G}^{\infty}$ is determined recursively as
follows: for each $h=0,1,2,\ldots,$ given $(V_0, V_1, V_2,\ldots,
V_h)$ and $r_h{\cal G}^{\infty}$, the numbers of children
$(k_{\nu}{\cal G}^{\infty},\nu\in \text{gen}(h, {\cal G}^{\infty}))$
are independent  with distribution $p$ for
$\nu\neq V_h$, and with the size-biased distribution $p^*$
for $\nu=V_h$; given also the numbers of children $k_{\nu}{\cal
G}^{\infty}$ for $\nu\in\text{gen}(h, {\cal G}^{\infty})$, the
vertex $V_{h+1}$ has uniform distribution on the set $\{(V_h,i), 1\le
i\le k_{V_h}{\cal G}^{\infty}\}$.
\end{proposition}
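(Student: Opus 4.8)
The plan is to handle the three parts in order, with (i) as the analytic core and (ii)--(iii) following from an algebraic rearrangement of the limiting law. For (i), fix $h\ge 0$ and $\mathbf{t}\in\mathbf{T}^h$ and set $q_m=\P(Z_m\mathcal{G}>0)$. For $n\ge h$ I would condition on $r_h\mathcal{G}$ and invoke the branching property: given $r_h\mathcal{G}=\mathbf{t}$, the subtrees rooted at the $Z_h\mathbf{t}$ vertices of generation $h$ are i.i.d.\ copies of $\mathcal{G}$, and $\{Z_n\mathcal{G}>0\}$ fails exactly when all of them are extinct by generation $n-h$. This gives
\[
\P\bigl(r_h\mathcal{G}=\mathbf{t},\,Z_n\mathcal{G}>0\bigr)=\P\bigl(r_h\mathcal{G}=\mathbf{t}\bigr)\bigl(1-(1-q_{n-h})^{Z_h\mathbf{t}}\bigr),
\]
and dividing by $\P(Z_n\mathcal{G}>0)=q_n$ yields an exact expression for the conditional law. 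Since $q_{n-h}\to 0$, one has $1-(1-q_{n-h})^{Z_h\mathbf{t}}\sim (Z_h\mathbf{t})\,q_{n-h}$, so the problem reduces to computing $\lim_n q_{n-h}/q_n$.

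The key input is $\lim_n q_{n+1}/q_n=\mu$, which I would read off the generating-function recursion $1-q_{n+1}=g(1-q_n)$: because the tree is critical or subcritical with $p_1<1$ we have $q_n\downarrow 0$, and $\bigl(1-g(1-s)\bigr)/s\to g'(1)=\mu$ as $s\downarrow 0$, whence $q_{n+1}/q_n\to\mu$ and $q_{n-h}/q_n\to\mu^{-h}$. Substituting gives $\lim_n\P(r_h\mathcal{G}=\mathbf{t}\mid Z_n\mathcal{G}>0)=\mu^{-h}(Z_h\mathbf{t})\P(r_h\mathcal{G}=\mathbf{t})$. Since $\mathbf{T}^h$ is countable and the limiting weights sum to $\mu^{-h}\E[Z_h\mathcal{G}]=\mu^{-h}\mu^h=1$, this is genuine convergence in distribution; consistency of the family across levels is precisely the martingale identity $\E[Z_{h+1}\mathcal{G}\mid r_h\mathcal{G}]=\mu\,Z_h\mathcal{G}$, so $\mathcal{G}^\infty$ is a well-defined element of $\mathbf{T}^\infty$. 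I expect the critical case $\mu=1$ to be the only delicate point here, since then $q_n\to0$ slowly and the ratio tends to $1$ from below; the argument rests solely on the one-sided derivative $g'(1^-)=\mu$, so no second-moment or Kolmogorov-type estimate is needed, and the subcritical case is immediate from the same recursion.

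For (iii), and hence (ii), I would unfold the factor $Z_h\mathbf{t}$ by introducing a distinguished vertex $V_h$ chosen uniformly among the $Z_h\mathbf{t}$ vertices of generation $h$, so that $\P(r_h\mathcal{G}^\infty=\mathbf{t},\,V_h=\nu)=\mu^{-h}\P(r_h\mathcal{G}=\mathbf{t})$ for each $\nu\in\mathrm{gen}(h,\mathbf{t})$. Writing $\P(r_h\mathcal{G}=\mathbf{t})=\prod_{|w|<h}p_{k_w\mathbf{t}}$, and distributing $\mu^{-h}$ as one factor $\mu^{-1}$ per ancestor $u_0=\emptyset,\dots,u_{h-1}$ of $\nu$, I would multiply and divide each ancestral term by $k_{u_j}\mathbf{t}$ to obtain $\prod_j p^*_{k_{u_j}\mathbf{t}}\cdot\prod_j (k_{u_j}\mathbf{t})^{-1}\cdot\prod_{\text{off-spine }w}p_{k_w\mathbf{t}}$. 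These three products are, respectively, the size-biased law $p^*$ along the ancestral line, the uniform choice of $V_{j+1}$ among the children of $V_j$, and the ordinary law $p$ off the line, which is exactly the recursive description in (iii). The consistent choices $(V_h)$ then determine a single ray $\emptyset=V_0,V_1,\dots$, infinite because $p^*_0=0$ forces every spine vertex to have a child; and since every subtree branching off the ray is an ordinary critical/subcritical Galton-Watson tree, hence a.s.\ finite, this ray is the unique infinite path, which is (ii). The remaining work is routine bookkeeping in the factorizations above.
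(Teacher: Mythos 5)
Your argument is correct, but note that the paper itself contains no proof of this proposition: it is explicitly recalled as a known result of Kesten \cite{[K87]} and Aldous--Pitman \cite{[AP98]}, so there is no internal argument to compare against. What you have written is essentially the standard proof from those references: part (i) via the branching-property identity $\P(r_h\mathcal{G}=\mathbf{t},\,Z_n\mathcal{G}>0)=\P(r_h\mathcal{G}=\mathbf{t})\bigl(1-(1-q_{n-h})^{Z_h\mathbf{t}}\bigr)$ together with the ratio limit $q_{n+1}/q_n\to\mu$ read off the recursion $1-q_{n+1}=g(1-q_n)$, and parts (ii)--(iii) via the size-biasing/spine factorization of $\mu^{-h}\P(r_h\mathcal{G}=\mathbf{t})$. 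The points you defer as routine are indeed routine and do check out: the consistency of the distinguished vertices across levels (that $\bigl(r_h\mathcal{G}^\infty,\pi(V_{h+1})\bigr)$ has the same law as $\bigl(r_h\mathcal{G}^\infty,V_h\bigr)$, which follows from $\E\bigl[k_\nu\mathcal{G}\mid r_h\mathcal{G}\bigr]=\mu$), and the uniqueness of the infinite path, which uses that the countably many off-spine subtrees are independent (sub)critical Galton--Watson trees and hence a.s. all finite, so that any infinite ray must coincide with the spine.
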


We say that $\mathcal{G}^\infty$ is the Galton-Watson tree associated
with $p$ conditioned on non-extinction. We then define the process 
$({\cal G}^{*}(u), 0\leq u\leq 1)$ as a pruning process
  associated with ${\cal
G}^{\infty}$. 
By Proposition \ref{Proplim}, ${\cal G}^{*}(1-)={\cal
G}^{*}(1)={\cal G}^{\infty}(1)$ almost surely. And since there
exists a unique infinite path, we get that ${\cal G}^{*}(u)$ is finite
almost
surely for all $0\leq u<1$.

The distribution of ${\cal G}^*(u)$ for fixed $u$
is given in the following proposition. Let us recall that $\mu(u)$ is
the mean of $p^{(u)}$ defined in (\ref{eq:defmu}).

\begin{proposition} \label{PropGstar}
For each $0\leq u<1$,
\bgeqn\label{PropGstar1}\P({\cal G}^{*}(u)={\mathbf
t})=\left(\sum_{\nu\in {\cal L}({\mathbf
t})}\frac{1}{\mu(1)^{|\nu|+1}}\right)\frac{\mu(1)-\mu(u)}{p_0^{(u)}}\P({\cal
G}(u)={\mathbf t}),\quad {\mathbf t\in \mathbf{T}}.\edeqn
\end{proposition}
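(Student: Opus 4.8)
The plan is to compute $\P(\mathcal{G}^*(u)=\mathbf{t})$ directly from the spine decomposition of $\mathcal{G}^\infty$ given in Proposition \ref{Propkesten}(iii), by summing over the possible exit points of the infinite spine. First I would record the crucial structural consequence of (\ref{Foroffnu}): node pruning is \emph{all or nothing}. If a surviving node $w$ is not cut (that is $\xi_w\le u$) then all of its children in $\mathcal{G}^\infty$ are retained, whereas if it is cut ($\xi_w>u$) all of them disappear. Hence, on the event $\{\mathcal{G}^*(u)=\mathbf{t}\}$, every inner node $w$ of $\mathbf{t}$ satisfies $k_w\mathcal{G}^\infty=k_w\mathbf{t}$ with $\xi_w\le u$, while every leaf of $\mathbf{t}$ is either a true leaf of $\mathcal{G}^\infty$ or a cut node. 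Since $\mathcal{G}^\infty$ carries a unique infinite spine $(V_0,V_1,\dots)$ and $\mathcal{G}^*(u)$ is a.s.\ finite for $u<1$, the spine is severed at a first node $V_H$, which is then necessarily a leaf of $\mathbf{t}$ lying on the ancestral line of the spine; moreover it is the \emph{unique} leaf of $\mathbf{t}$ on the spine, since $V_0,\dots,V_{H-1}$ keep their children and are thus inner nodes of $\mathbf{t}$. I would therefore fix a leaf $\nu\in\mathcal{L}(\mathbf{t})$ with $|\nu|=h$ and compute $\P(\mathcal{G}^*(u)=\mathbf{t},\,V_h=\nu)$, the spine being forced to follow the ancestral path $\emptyset=a_0,a_1,\dots,a_h=\nu$.

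Next I would evaluate this probability as a product of independent per-node contributions, using part (iii) of Proposition \ref{Propkesten} together with the independence of the pruning variables. An off-spine inner node $w$ contributes $p_{k_w\mathbf{t}}\,u^{k_w\mathbf{t}-1}=p^{(u)}_{k_w\mathbf{t}}$ (offspring law $p$, not cut). A spine inner node $a_i$ ($0\le i<h$) carries the size-biased law $p^*$, a uniform choice of its spine child and the event $\{\xi_{a_i}\le u\}$, so it contributes $\frac{k_{a_i}\mathbf{t}\,p_{k_{a_i}\mathbf{t}}}{\mu}\cdot\frac{1}{k_{a_i}\mathbf{t}}\cdot u^{k_{a_i}\mathbf{t}-1}=\frac{1}{\mu}p^{(u)}_{k_{a_i}\mathbf{t}}$, the size-biasing exactly cancelling the uniform spine choice. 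An off-spine leaf contributes $p_0+\sum_{m\ge1}p_m(1-u^{m-1})=p_0^{(u)}$, using $\sum_{m\ge1}p_mu^{m-1}=1-p_0^{(u)}$. Finally the exit leaf $\nu$, which lies on the spine and must be cut, contributes $\sum_{m\ge1}\frac{mp_m}{\mu}(1-u^{m-1})=\frac{\mu-\mu(u)}{\mu}$ after using $\sum_{m\ge1}mp_mu^{m-1}=\mu(u)$; the continuation of the spine strictly below $\nu$ is entirely pruned and hence contributes a factor $1$.

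Multiplying these factors, the $h$ spine inner nodes and the exit leaf produce $\mu^{-(h+1)}\bigl(\mu(1)-\mu(u)\bigr)$ (recall $\mu=\mu(1)$), the inner-node factors assemble into $\prod_{w\in\mathbf{t}^i}p^{(u)}_{k_w\mathbf{t}}$, and the off-spine leaves give $(p_0^{(u)})^{\#\mathcal{L}(\mathbf{t})-1}$. Comparing with (\ref{ForGuT}), which yields $\P(\mathcal{G}(u)=\mathbf{t})=\bigl(\prod_{w\in\mathbf{t}^i}p^{(u)}_{k_w\mathbf{t}}\bigr)(p_0^{(u)})^{\#\mathcal{L}(\mathbf{t})}$, the surplus powers of $p_0^{(u)}$ collapse to $1/p_0^{(u)}$, giving
\[
\P\bigl(\mathcal{G}^*(u)=\mathbf{t},\,V_{|\nu|}=\nu\bigr)=\frac{1}{\mu(1)^{|\nu|+1}}\,\frac{\mu(1)-\mu(u)}{p_0^{(u)}}\,\P\bigl(\mathcal{G}(u)=\mathbf{t}\bigr).
\]
Summing over $\nu\in\mathcal{L}(\mathbf{t})$ (the events $\{V_{|\nu|}=\nu\}$ being disjoint and exhausting $\{\mathcal{G}^*(u)=\mathbf{t}\}$) then yields (\ref{PropGstar1}).

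The main obstacle is the bookkeeping of the second step: one must keep track of the fact that $\nu$ is simultaneously a spine node (hence size-biased, with a uniform spine-child choice) and a cut node, and that everything hanging below $\nu$ along the spine is removed and therefore integrates to $1$. To make this fully rigorous I would carry out the computation on the finite truncations $r_h\mathcal{G}^\infty$ and let $h\to\infty$, using Proposition \ref{Proplim} and the a.s.\ finiteness of $\mathcal{G}^*(u)$ for $u<1$ to justify that the spine is cut at finite height and that the contributions below $\nu$ may be integrated out.
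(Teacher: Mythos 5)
Your proof is correct, and it takes a genuinely different route from the paper's. Both arguments rest on the spine description of $\mathcal{G}^\infty$ in Proposition \ref{Propkesten}(iii), but the paper argues by induction on the height of $\mathbf{t}$: it first checks $\mathbf{t}=\{\emptyset\}$, then decomposes at the root only, conditioning on $k_\emptyset\mathbf{t}$ and on which child the spine enters ($V_1=i$, uniformly), and invokes the self-similarity identities (\ref{PropGstar2}) and (\ref{PropGstar3}) --- the subtree of $\mathcal{G}^*(u)$ above a spine node is again distributed as $\mathcal{G}^*(u)$, above an off-spine node as $\mathcal{G}(u)$ --- so that the induction hypothesis applies to $T_{(i)}\mathbf{t}$ and the weight $\sum_{\nu\in\mathcal{L}(\mathbf{t})}\mu(1)^{-(|\nu|+1)}$ is assembled recursively. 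You instead make one global computation: you partition the event $\{\mathcal{G}^*(u)=\mathbf{t}\}$ according to the leaf at which the spine is severed, and evaluate each cell as a product of per-node factors, the size-biasing $kp_k/\mu$ at spine nodes cancelling exactly against the uniform choice $1/k$ of the spine child. Your bookkeeping is right: the all-or-nothing nature of node pruning forces the cut spine node to be a leaf of $\mathbf{t}$ and all earlier spine nodes to be inner nodes of $\mathbf{t}$, distinct leaves give disjoint cells, and the per-node factors are correct (note that the $m=1$ terms vanish in both of your leaf sums, consistently with the fact that single-child nodes are never pruned). As for what each approach buys: the paper's induction manipulates only one generation at a time, so it is immediately rigorous given the generation-by-generation form of Proposition \ref{Propkesten}(iii) and involves no infinite-product bookkeeping; your computation is more illuminating, since it interprets the density in (\ref{PropGstar1}) as a sum over the possible exit leaves of the spine --- the term indexed by $\nu$ being the probability weight of the spine leaving $\mathbf{t}$ at $\nu$ --- and it yields the joint law of $\mathcal{G}^*(u)$ and the exit point as a by-product. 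The truncation step you sketch at the end is the right way to justify the factorization, and it is painless here because the event $\{\mathcal{G}^*(u)=\mathbf{t},\ V_{|\nu|}=\nu\}$ depends only on $r_{|\mathbf{t}|+1}\mathcal{G}^\infty$ and on the pruning marks attached to the nodes of $\mathbf{t}$.
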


\begin{proof} We prove (\ref{PropGstar1}) inductively.
First, note that 
$$\P(k_{\emptyset}{\cal
G}^{\infty}(1)=n)=p_n^*=np_n/\mu(1).$$
Then
$$
\P({\cal
G}^{*}(u)=\{\emptyset\})=\sum_{n\geq1}(1-u^{n-1})\P(k_{\emptyset}{\cal
G}^{\infty}(1)=n)=(\mu(1)-\mu(u))/\mu(1).
$$
Since $\P({\cal G}(u)=\{\emptyset\})=p_0^{(u)}$, 
(\ref{PropGstar1}) holds for ${\mathbf t}=\{\emptyset\}$.

On the other
hand, by Proposition \ref{Propkesten}, we have
$$
\P\left(T_{\nu}{\cal
G}^{\infty}(1)=\mathbf{t}\bigm|\nu=V_{|\nu|}\right)=\P\left({\cal G}^{\infty}=\mathbf{t}\right)
$$
 and
 $$ \P\left(T_{\nu}{\cal
G}^{\infty}(1)=\mathbf{t}\bigm|\nu\neq V_{|\nu|}\right)=\P\left({\cal G}=\mathbf{t}\right)$$ which
gives
 \bgeqn\label{PropGstar2}
\P\left(T_{\nu}{\cal G}^*(u)=\mathbf{t}\bigm|\nu\in {\cal
G}^*(u),\nu=V_{|\nu|}\right)=\P\left({\cal G}^*(u)=\mathbf{t}\right) \edeqn and
 \bgeqn\label{PropGstar3}
\P\left(T_{\nu}{\cal G}^*(u)=\mathbf{t}\bigm|\nu\in {\cal G}^*(u),\nu\neq
V_{|\nu|}\right)=\P\left({\cal G}(u)=\mathbf{t}\right) ,\edeqn 
respectively. Meanwhile,
since ${\cal G}(u)$ is a Galton-Watson tree,
$$
\P\left(T_{\nu}{\cal G}(u)=\mathbf{t}\bigm|\nu\in {\cal G}(u)\right)=\P({\cal
G}(u)=\mathbf{t}),$$
which implies
 \bgeqn\label{PropGstar4}
\P({\cal G}(u)={\mathbf t})=p_{k_{\emptyset}{\mathbf t}}^{(u)}\prod_{1\leq j\leq
k_{\emptyset}{\mathbf t}}\P({\cal G}(u)=T_{(j)}{\mathbf t}). \edeqn 

For some
$h\geq0$, assume that (\ref{PropGstar1}) holds for all trees in
${\mathbf T}^h$. By (\ref{PropGstar2}) and
(\ref{PropGstar3}), we have for ${\mathbf{t}\in \mathbf{T}}^{h+1}\backslash{\mathbf
T}^h$,
 \begin{align*}
\P({\cal G}^{*}(u)={\mathbf t}) &=
\P(k_\emptyset\mathcal{G}^*(u)=k_\emptyset\mathbf{t})\sum_{i=1}^{k_\emptyset\mathbf{t}}\P(\forall
1\le j\le k_\emptyset\mathbf{t},\ T_{(j)}\mathcal{G}^*(u)=T_{(j)}\mathbf{t}\bigm|V_1=i)\P(V_1=i)\\
&=u^{k_{\emptyset}{\mathbf
t}-1}p^*_{k_{\emptyset}{\mathbf t}}\frac{1}{k_{\emptyset}{\mathbf
t}}\sum_{i=1}^{k_{\emptyset}{\mathbf t}}\left(\P({\cal
G}^{*}(u)=T_{(i)}{\mathbf t})\cdot\prod_{j\neq i,1\leq j\leq
k_{\emptyset}{\mathbf
t}}\P\left({\cal G}(u)=T_{(j)}{\mathbf t}\right)\right)\\
&=\frac{p_{k_{\emptyset}{\mathbf
t}}^{(u)}}{\mu(1)}\sum_{i=1}^{k_{\emptyset}{\mathbf t}}\sum_{\nu\in {\cal
L}(T_{(i)}{\mathbf
t})}\left(\frac{1}{\mu(1)^{|\nu|}}\frac{\mu(1)-\mu(u)}{p_0^{(u)}}\cdot\prod_{1\leq
j\leq k_{\emptyset}{\mathbf t}}\P\left({\cal G}(u)=T_{(j)}{\mathbf
t}\right)\right)\\
 &=\left(\sum_{\nu'\in {\cal L}({\mathbf
t})}\frac{1}{\mu(1)^{|\nu'|+1}}\right)\frac{\mu(1)-\mu(u)}{p_0^{(u)}}\P({\cal
G}(u)={\mathbf t}),
\end{align*}
 where the last equality follows from
(\ref{PropGstar4}) and the facts
$$\bigcup_{1\leq i\leq k_{\emptyset}{\mathbf t}}\{ i\nu: \nu\in {\cal
L}(T_{(i)}{\mathbf t})\}={\cal L}(\mathbf t)$$ and
 $|i\nu|=|\nu|+1$. Since ${\mathbf
T}=\cup_{h=1}^{\infty}{\mathbf T}^h$, (\ref{PropGstar1}) follows
inductively. \qed

\end{proof}
\begin{remark}\label{rem:Gstar}
If ${\cal G}$ is a critical Galton-Watson tree ($\mu(1)=1$) with $p_1<1$, Formula
(\ref{PropGstar1}) becomes
$$\P({\cal G}^{*}(u)={\mathbf t})=\frac{\#{\cal L}({\mathbf
t})(1-\mu(u))}{p_0^{(u)}}\P({\cal G}(u)={\mathbf t})=M(u,\mathbf{t})\P({\cal G}(u)={\mathbf t}).$$
In other words, the law of $\mathcal{G}^*(u)$ is absolutely continuous
with respect to the law of $\mathcal{G}(u)$ with density the
martingale of Corollary \ref{CorL}.
\end{remark}

\section{The Ascension Process and Its
  Representation}\label{sec:representation}
In this section, we consider a critical offspring distribution $p$
with $p_1<1$
and set
$$I=\left\{u\ge 0,\ \sum_{k=1}^{+\infty}u^{k-1}p_k\le 1\right\}.$$

\begin{remark}
We have $I=[0,\bar u]$  with $\bar u\ge 1$. Indeed, as the all the
coefficients of the sum are nonnegative, either the sum converges at
the radius of convergence $R$ and is continuous on $[0,R]$, or it
tends to infinity when $u\to R$. In the latter case, there is a unique
$\bar u<R$ for which the generating function takes the value 1 (by
continuity). In the former case, either the value of the generating
function at $R$ is greater than 1 and the previous argument also
applies, or the value of the generating function at $R$ is less than 1
and $\bar u=R$.
\end{remark}

Let us give some examples:

\begin{example} The binary case.\\
We consider the critical offspring distribution $p$ defined by $p_0=p_2=1/2$
(each individual dies out or gives birth to two children with equal
probability). In that case, we have
$$\sum_{k=1}^{+\infty} u^{k-1}p_k=\frac{1}{2}u$$
and hence we have $\bar u=2$, $I=[0,2]$ and $p_0^{(2)}=0$.
\end{example}

\begin{example} The geometric case.\\
We consider the critical offspring distribution $p$ defined by
$$\begin{cases}
p_k=\alpha \beta^{k-1} & \mbox{for }k\ge 1,\\
p_0=1-\frac{\alpha}{1-\beta}\cdot
\end{cases}$$
Then, for every $u$, $p^{(u)}$ is still of that form. As the offspring
distribution $p$ is critical, we must have $\alpha=(1-\beta)^2$,
$0<\beta<1$. In that case, we have
$$\sum_{k=1}^{+\infty}p_ku^{k-1}=\frac{(1-\beta)^2}{1-\beta u}$$
and hence we have $\bar u=2-\beta$, $I=[0,\bar u]$, $p_0^{(\bar u)}=0$.
\end{example}

\begin{example} We consider the critical offspring distribution $p$ defined by
$$\begin{cases}
\displaystyle p_k=\frac{6}{\pi^2}\frac{1}{k^3} & \mbox{for }k\ge 1,\\
\displaystyle p_0=1-\sum_{k=1}^{+\infty}p_k,
\end{cases}$$
then $\bar u=1$ and $p_0^{(\bar u)}=p_0>0$.
\end{example}

For $u\in I$, let us define
 \bgeqn
 \label{ForPu1}
 \left\{\begin{array}{lll}
 p_k^{(u)}&=&u^{k-1}p_k,\quad k\geq1,\\
 p_0^{(u)}&=&\displaystyle 1-\sum_{k=1}^{+\infty}p_k^{(u)}.
 \end{array}\right.
 \edeqn
Then, for $u\in I$, $p^{(u)}$ is still an offspring distribution, it
is sub-critical for $u<1$, critical for $u=1$ and super-critical for $u>1$.

We consider a tree-valued process $(\mathcal{G}(u),\ u\in I)$ such
that the process $(\mathcal{G}(t\bar u),\ t\in [0,1])$ is a pruning process
associated with $\mathcal{G}(\bar u)$. Then this process $\mathcal{G}$
satisfies the following properties:
\begin{itemize}
\item for every $u\in I$, $\mathcal{G}(u)$ is a Galton-Watson tree
  with offspring distribution $p^{(u)}$,
\item for every $\alpha,\beta\in I$, $\alpha<\beta$,
  $\mathcal{G}(\alpha)$ is a pruning of $\mathcal{G}(\beta)$.
\end{itemize}

We now consider
$\{{\cal G}(u),\ u\in I\}$ as an \textsl{ascension
process} with the \textsl{ascension time}
$$A:=\inf\{u\in I,\ \# {\cal G}(u)=\infty\}$$
with the convention $\inf\emptyset=\bar u$.

The state in the ascension process at time $u$  is ${\cal G}(u)$ if
$0\leq u< A$ and ${\mathbf t(\infty)}$ if $A\leq u$ where ${\mathbf
t(\infty)}$ is a state representing any infinite tree. Then the
ascension process is still a Markov process with countable
state-space ${\mathbf T}\cup\{{\mathbf t}(\infty)\}$, where $\mathbf t(\infty)$ is an absorbing
state.

Denote by $F(u)$ the extinction probability of a Galton-Watson process with offspring distribution $p^{(u)}$, which is the least
non-negative root of the following equation with respect to $s$
\bgeqn\label{Forgu} s=g_u(s)=1-g_1(u)/u+g_1(us)/u \edeqn
where $g_1$ is the generating function associated with the offspring
distribution $p$ and $g_u$ is the generating function associated with $p^{(u)}$.

We set
\begin{equation}\label{eq:defF}
\bar F(u)=1-{F}(u).
\end{equation}

Thus
\begin{equation}\label{eq:dual}
u\bar{F}(u)+g_1(u-u\bar{F}(u))=g_1(u).
\end{equation}

The distribution of the ascension process is determined by the
transition rates
(\ref{fintran}) and
 \bgeqn\label{Forratein}
q_{u}({\bf s}\rar{\bf t (\infty)})=\frac{\#{\cal L}({\bf s})}{up_0^{(u)}}
\sum_{k=2}^{\infty}(k-1)p_k^{(u)}(1-F(u)^k).
 \edeqn

 Define the conjugate
  $\hat{u}$ by 
\begin{equation}\label{eq:defhatu}
\hat{u}=uF(u)\qquad \mbox{for }u\in I.
\end{equation}

In particular, for $u\le 1$, $F(u)=1$ and consequently $\hat u=u$. On
the contrary, for $u>1$, Proposition  \ref{PropGuGuhat} shows that
$\hat u\le 1$.

We can restate Equation (\ref{eq:dual}) into
\begin{equation}\label{eq:dual2}
g_1(\hat u)-g_1(u)=\hat u-u.
\end{equation}
Notice that this equation with the condition $\hat u\le 1$
characterizes $\hat u$.

We first prove the following result which is already well-known, see
for instance \cite{[AN72]}, p52. We just restate this property in terms of
our pruning parameter. 

\begin{proposition}
 \label{PropGuGuhat}For any $u\in I$, $u\ge 1$
 \bgeqn\label{PropGuGuhat1}
\P({\cal G}(u)={\mathbf t})=F(u)\P({\cal G}(\hat{u})={\mathbf t}),\quad {\mathbf
t\in \mathbf{T}}.
 \edeqn
\end{proposition}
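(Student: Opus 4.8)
The plan is to compute both sides directly from the product formula (\ref{ForGuT}), which is legitimate because the statement only concerns finite trees ${\mathbf t}\in{\mathbf T}$, and to reduce the whole identity to two elementary facts: a counting identity relating the leaves and the inner nodes of ${\mathbf t}$, and the fixed-point equation defining $F(u)$. First I would split the product $\P({\cal G}(u)={\mathbf t})=\prod_{\nu\in{\mathbf t}}p_{k_\nu{\mathbf t}}^{(u)}$ according to whether $\nu$ is a leaf or an inner node. Writing $L=\#{\cal L}({\mathbf t})$, each leaf contributes a factor $p_0^{(u)}$ and, by (\ref{eq:def_pu_1}), each inner node $\nu$ contributes $u^{k_\nu{\mathbf t}-1}p_{k_\nu{\mathbf t}}$, so that
\[
\P({\cal G}(u)={\mathbf t})=\bigl(p_0^{(u)}\bigr)^{L}\,u^{\,S}\prod_{\nu\in{\mathbf t}^i}p_{k_\nu{\mathbf t}},\qquad S:=\sum_{\nu\in{\mathbf t}^i}(k_\nu{\mathbf t}-1),
\]
where the last product does not depend on $u$.

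The first key step is the counting identity $S=L-1$. This follows from the fact that in a finite tree the total number of children equals $\#{\mathbf t}-1$ (every node but the root has exactly one parent), so $\sum_{\nu\in{\mathbf t}^i}k_\nu{\mathbf t}=\#{\mathbf t}-1$, while $\#{\mathbf t}^i=\#{\mathbf t}-L$; subtracting gives $S=L-1$. Consequently $\P({\cal G}(u)={\mathbf t})=u^{L-1}\bigl(p_0^{(u)}\bigr)^{L}\,\Pi({\mathbf t})$ with $\Pi({\mathbf t}):=\prod_{\nu\in{\mathbf t}^i}p_{k_\nu{\mathbf t}}$ independent of $u$. Forming the ratio of the expressions at $u$ and at $\hat u=uF(u)$ (using $u/\hat u=1/F(u)$ from (\ref{eq:defhatu})) then collapses everything to
\[
\frac{\P({\cal G}(u)={\mathbf t})}{\P({\cal G}(\hat u)={\mathbf t})}=\left(\frac{p_0^{(u)}}{p_0^{(\hat u)}}\right)^{L}F(u)^{-(L-1)},
\]
valid whenever $\Pi({\mathbf t})\neq 0$ (if $\Pi({\mathbf t})=0$ both sides of (\ref{PropGuGuhat1}) vanish and there is nothing to prove). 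Note that here $\hat u=uF(u)\le u\le\bar u$, so $\hat u\in I$ and $p^{(\hat u)}$ is a bona fide offspring distribution, making the right-hand side of (\ref{PropGuGuhat1}) well defined.

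It therefore remains to establish the single scalar relation $p_0^{(u)}=F(u)\,p_0^{(\hat u)}$, for then the ratio becomes $F(u)^{L}F(u)^{-(L-1)}=F(u)$, which is exactly (\ref{PropGuGuhat1}). This is where the definition of $\hat u$ enters: since $F(u)$ is a fixed point of $g_u$, Equation (\ref{Forgu}) reads $F(u)=1-g_1(u)/u+g_1(\hat u)/u$, which rearranges precisely to (\ref{eq:dual2}), namely $g_1(\hat u)-g_1(u)=\hat u-u$. Writing $p_0^{(v)}=g_v(0)=1-g_1(v)/v+p_0/v$ from (\ref{genu}) for $v=u$ and $v=\hat u$, and using $F(u)/\hat u=1/u$, turns the desired relation $p_0^{(u)}=F(u)p_0^{(\hat u)}$ into (\ref{eq:dual2}) as well; so the two are equivalent and the relation holds. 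I expect the only genuinely substantive points to be the counting identity $S=L-1$ and this last scalar reduction; once both are in hand the proposition is immediate. As a byproduct one may note that summing (\ref{PropGuGuhat1}) over all ${\mathbf t}\in{\mathbf T}$ gives $\sum_{\mathbf t}\P({\cal G}(\hat u)={\mathbf t})=1$, so ${\cal G}(\hat u)$ is a.s.\ finite and $p^{(\hat u)}$ is (sub)critical, which recovers $\hat u\le 1$ for $u\ge 1$.
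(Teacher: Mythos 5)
Your proof is correct and takes essentially the same route as the paper's own: the same split of the product formula (\ref{ForGuT}) into leaves and inner nodes, the same counting identity $\#\mathcal{L}(\mathbf{t})=1+\sum_{\nu\in\mathbf{t}^i}(k_\nu\mathbf{t}-1)$, and the same key scalar relation $p_0^{(u)}=F(u)\,p_0^{(\hat u)}$, which both you and the paper deduce from the fixed-point property $F(u)=g_u(F(u))$ (the paper computes $p_0^{(\hat u)}=1-\sum_{k\ge 1}F(u)^{k-1}p_k^{(u)}$ directly, whereas you pass through the equivalent identity (\ref{eq:dual2})). The extra details you supply, namely the proof of the counting identity and the remark about trees with $\Pi(\mathbf{t})=0$, are correct but cosmetic.
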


In other words, the law of $\mathcal{G}(u)$ conditioned to be finite
is $\mathcal{G}(\hat u)$, which explains the term {\it conjugate} for
$\hat u$. 

\begin{proof}
By (\ref{ForGuT}), we have
$$\P({\cal G}(u)={\mathbf t})=\prod_{\nu\in {\mathbf t}\setminus{\cal L}({\mathbf t})}p_{k_{\nu}{\mathbf t}}^{(u)}\cdot \prod_{\nu\in{\cal L}({\mathbf t})}p_0^{(u)} $$
and by (\ref{eq:defhatu}),
$$
\P({\cal G}(u)={\mathbf t})=F(u)^{-\left(\sum_{\nu\in {\mathbf t}\setminus{\cal
L}({\mathbf t})}(k_{\nu}{\mathbf
t}-1)\right)}\left(\frac{p_0^{(u)}}{p_0^{(\hat{u})}}\right)^{\#{\cal L}({\mathbf
t})}\P({\cal G}(\hat{u})={\mathbf t}).
$$

We also have
\begin{equation}\label{eq:pzero}
p_0^{(\hat{u})}=1-\sum_{k=1}^{\infty}F(u)^{k-1}p_k^{(u)}=1+p_0^{(u)}/F(u)-g_u(F(u))/F(u)=p_0^{(u)}/F(u).
\end{equation}

Then the desired result follows from the fact that given a tree
$\mathbf{t}\in \mathbf{T}$, 
$$\#\mathcal{L}({\mathbf t})=1+\sum_{\nu\in {\mathbf t}\setminus{\cal
L}({\mathbf t})}(k_{\nu}{\mathbf t}-1).$$\qed
\end{proof}

In what follows, we will often suppose that
\begin{equation}\label{assump}
p_0^{(\bar u)}=0,
\end{equation}
which is equivalent to the condition
$$\sum_{k=1}^{+\infty}\bar u^{k-1}p_k=1$$
and which implies that $\mathcal{G}(\bar u)$ is infinite.

We can however give the law of $A$ and of the tree
$\mathcal{G}(A-)$ just before the ascension time in general, this is
the purpose of the next proposition.

\begin{proposition}\label{PropA}
 For $u\in[1,\bar{u})$ and $\mathbf{t}\in \mathbf{T}$,
 \bgeqn\label{PropA1}\P(A\leq u)=\bar{F}(u).\edeqn
\bgeqn\label{PropA3} \P({\cal G}(A-)={\mathbf t}| A=u)=M(\hat u,\mathbf{t})\P({\cal G}(\hat{u})={\mathbf t}).
\edeqn
\bgeqn\label{PropA5}\P(\#\mathcal{G}(A-)<+\infty\bigm|A=u)=1.\edeqn

Furthermore, under assumption (\ref{assump}),
\bgeqn\label{PropA1bis}\P(A<\bar u)=1\edeqn
and
 \bgeqn\label{PropA4}
 \left(A, \frac{\hat{A}}{A}\right)=(A, F(A))
 \overset{d}{=}\left( \bar{F}^{-1}(1-\gamma),\gamma\right),
\edeqn where $\bar{F}^{-1}: [0,1]\rar[1,\bar{ u}]$ is the inverse
function of $\bar{F}$ and $\gamma$ is a r.v uniformly distributed on $(0,1)$.
\end{proposition}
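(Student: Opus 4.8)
Looking at this proposition, I need to prove several statements about the ascension time $A$ and the tree $\mathcal{G}(A-)$. Let me think through the proof strategy.

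The plan is to treat the ascension process as a pure-jump Markov process on $\mathbf{T}\cup\{\mathbf{t}(\infty)\}$ and to extract the joint law of $(A,\mathcal{G}(A-))$ from the explosion rate (\ref{Forratein}), using Proposition \ref{PropGuGuhat} as the bridge between $\mathcal{G}(u)$ and its conjugate $\mathcal{G}(\hat u)$. First I would establish (\ref{PropA1}): since the pruning process is non-decreasing, $\#\mathcal{G}(u)$ is non-decreasing in $u$, so $\{A\le u\}=\{\#\mathcal{G}(u)=\infty\}$. As $\mathcal{G}(u)$ is a Galton-Watson tree with offspring distribution $p^{(u)}$, classical theory gives $\P(\#\mathcal{G}(u)=\infty)=1-F(u)=\bar F(u)$; in particular $A$ has density $\bar F'(u)$ on $[1,\bar u)$.

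For (\ref{PropA3}) I would write, for a finite tree $\mathbf{t}$,
\[
\P(\mathcal{G}(A-)=\mathbf{t},\,A\in du)=\P(\mathcal{G}(u)=\mathbf{t})\,q_u(\mathbf{t}\to\mathbf{t}(\infty))\,du,
\]
the standard occupation-times-rate formula for the jump into the absorbing state. Proposition \ref{PropGuGuhat} replaces $\P(\mathcal{G}(u)=\mathbf{t})$ by $F(u)\P(\mathcal{G}(\hat u)=\mathbf{t})$, and (\ref{Forratein}) supplies $q_u$. The heart of the matter is the identity
\[
\sum_{k\ge 2}(k-1)p_k^{(u)}(1-F(u)^k)=\mu(u)-F(u)\mu(\hat u)-\bar F(u),
\]
which I would obtain by recognizing the left side as $s\,g_u'(s)-g_u(s)+p_0^{(u)}$ evaluated at $s=F(u)$, together with $g_u(F(u))=F(u)$ and $g_u'(F(u))=g_1'(\hat u)=\mu(\hat u)$ (the last from $\hat u=uF(u)$ and $g_u'(s)=g_1'(us)$).

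The main obstacle is matching this with the density $\bar F'(u)$. Differentiating the duality relation $u\bar F(u)+g_1(\hat u)=g_1(u)$ (equivalently (\ref{eq:dual2})) and using $\hat u'=F(u)-u\bar F'(u)$ yields
\[
u\,\bar F'(u)\,(1-\mu(\hat u))=\mu(u)-\bar F(u)-F(u)\mu(\hat u),
\]
so the explosion rate equals $\#\mathcal{L}(\mathbf{t})\,\bar F'(u)(1-\mu(\hat u))/p_0^{(u)}$. Substituting $p_0^{(\hat u)}=p_0^{(u)}/F(u)$ from (\ref{eq:pzero}) collapses the factors of $F(u)$ and leaves precisely $\bar F'(u)\,du\,M(\hat u,\mathbf{t})\,\P(\mathcal{G}(\hat u)=\mathbf{t})$; dividing by $\P(A\in du)=\bar F'(u)\,du$ gives (\ref{PropA3}).

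The remaining assertions then fall out quickly. For (\ref{PropA5}) I would sum (\ref{PropA3}) over $\mathbf{t}\in\mathbf{T}$: by (\ref{CorL1}), $\E[\#\mathcal{L}(\mathcal{G}(\hat u))]=p_0^{(\hat u)}/(1-\mu(\hat u))$ (valid since $\hat u\le 1$), so the total mass is $\frac{1-\mu(\hat u)}{p_0^{(\hat u)}}\E[\#\mathcal{L}(\mathcal{G}(\hat u))]=1$, showing $\mathcal{G}(A-)$ is a.s. finite and confirming that (\ref{PropA3}) is a genuine conditional law. Under (\ref{assump}), $p_0^{(\bar u)}=0$ forces $F(\bar u)=0$, hence $\bar F(\bar u-)=1$ and $\P(A<\bar u)=\lim_{u\to\bar u-}\bar F(u)=1$, which is (\ref{PropA1bis}). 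Finally, since $A$ has continuous, strictly increasing distribution function $\bar F$ on $[1,\bar u]$, the probability integral transform makes $\bar F(A)$ uniform on $(0,1)$; writing $\gamma=F(A)=1-\bar F(A)$ gives $\gamma$ uniform and $A=\bar F^{-1}(1-\gamma)$, while $\hat A/A=F(A)=\gamma$ by (\ref{eq:defhatu}), establishing (\ref{PropA4}).
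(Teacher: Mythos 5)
Your handling of (\ref{PropA1}), (\ref{PropA3}), (\ref{PropA5}) and (\ref{PropA4}) is correct and is, in substance, the paper's own proof: the paper likewise starts from $\P(\mathcal{G}(A-)=\mathbf{t},A\in du)=\P(\mathcal{G}(u)=\mathbf{t})\,q_u(\mathbf{t}\rightarrow\mathbf{t}(\infty))\,du$ with the rate (\ref{Forratein}), substitutes $\P(\mathcal{G}(u)=\mathbf{t})=F(u)\P(\mathcal{G}(\hat u)=\mathbf{t})$ from (\ref{PropGuGuhat1}), and shows $\sum_{k\ge2}(k-1)p_k^{(u)}\bigl(1-F(u)^k\bigr)=-uF'(u)\bigl(1-g_1'(\hat u)\bigr)$. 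Your identity $\sum_{k\ge2}(k-1)p_k^{(u)}(1-F(u)^k)=\mu(u)-F(u)\mu(\hat u)-\bar F(u)$ (obtained via $g_u$ and its fixed point) combined with your differentiation of $u\bar F(u)+g_1(\hat u)=g_1(u)$ is exactly the paper's combination of (\ref{eq:expr_F'}) and (\ref{eq:dual2}), merely reorganized — differentiating the duality relation is the same as differentiating the fixed-point equation (\ref{Forgu}). I checked both of your displayed identities and they are correct; the normalization via (\ref{eq:pzero}) and (\ref{CorL1}) also matches the paper (the paper invokes the martingale property, which is (\ref{CorL1}) in disguise).

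The genuine gap is in (\ref{PropA1bis}). You argue: $p_0^{(\bar u)}=0$ forces $F(\bar u)=0$, ``hence $\bar F(\bar u-)=1$''. The first claim is fine, but the second silently assumes that $F$ is left-continuous at $\bar u$, i.e. $F(\bar u-)=F(\bar u)$, and that is precisely what must be proved: what one actually knows is $\P(A<\bar u)=\lim_{u\uparrow\bar u}\bar F(u)=\bar F(\bar u-)$, and extinction probabilities are not automatically continuous in a parameter (this family passes from $p_0^{(u)}>0$ to $p_0^{(\bar u)}=0$ at the endpoint, exactly the situation where a jump could a priori occur). The paper closes this hole: $F$ is non-increasing, so $F(\bar u-)$ exists; letting $u\uparrow\bar u$ in the fixed-point equation $F(u)=g_u(F(u))$, using continuity of $(u,s)\mapsto g_u(s)$ up to $u=\bar u$, gives $F(\bar u-)=g_{\bar u}(F(\bar u-))$; under (\ref{assump}) the only fixed points of $g_{\bar u}$ in $[0,1]$ are $0$ and $1$, and since $F(\bar u-)\le F(u)<1$ for $u\in(1,\bar u)$, necessarily $F(\bar u-)=0$. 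Without this (or an equivalent) argument, (\ref{PropA1bis}) is unproved, and the gap propagates to (\ref{PropA4}), since your probability-integral-transform step needs $\bar F$ to be continuous on all of $[1,\bar u]$ with range filling $[0,1)$, which is exactly what (\ref{PropA1bis}) guarantees.
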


\begin{proof}
We have
$$\P(A\leq u)=\P(\# {\cal
 G}(u)=\infty)=\bar{F}(u)$$
which gives (\ref{PropA1}). 

By the definition of $F(u)$ in Formula (\ref{Forgu}) and the implicit
function Theorem, function $F$ is differentiable on $(1,\bar u)$. 
This gives for $u\in (1,\bar u)$
$$\P(A\in du)=-F'(u)du$$
and differentiating (\ref{Forgu}) gives
\begin{equation}\label{eq:expr_F'}
u\bigl(1-g_1'(\hat u)\bigr)F'(u)=1-g'_1(u)-F(u)\bigl(1-g'_1(\hat u)\bigr).
\end{equation}

By (\ref{Forratein}), we have for $\mathbf{t}\in\mathbf{T}$
\bgeqn\label{PropA2} \P({\cal G}(A-)={\mathbf t}, A\in du)=\frac{\#{\cal
L}({\mathbf t})\P({\cal G}(u)={\mathbf t})}{u p_0^{(u)}}
\sum_{k=2}^{\infty}{(k-1)}{p_k^{(u)}}(1-F(u)^k)du. \edeqn

Now, using (\ref{PropGuGuhat1}), we have
$$\P({\cal G}(A-)={\mathbf t}, A\in du)=\frac{\#{\cal
L}({\mathbf t})\P({\cal G}(\hat u)={\mathbf t})F(u)}{u p_0^{(u)}}
\sum_{k=2}^{\infty}{(k-1)}{p_k^{(u)}}(1-F(u)^k)du.$$

Easy computations give
\begin{align*}
\sum_{k=2}^{\infty}{(k-1)}{p_k^{(u)}}(1-F(u)^k)
& =g_1'(u)-\frac{g_1(u)}{u}-F(u)g'_1(\hat u)+\frac{g(\hat u)}{u}\\
& =-F'(u)u\bigl(1-g'_1(\hat u)\bigr)+1-F(u)+\frac{g_1(\hat
  u)-g_1(u)}{u}\\
& =-F'(u)u\bigl(1-g'_1(\hat u)\bigr)
\end{align*}
using first Equation (\ref{eq:expr_F'}) and then Equation
(\ref{eq:dual2}).

This finally gives
\begin{align*}
\P({\cal G}(A-)={\mathbf t}, A\in du) & =-\frac{\#{\cal
L}({\mathbf t})\P({\cal G}(\hat u)={\mathbf
    t})F(u)}{p_0^{(u)}}F'(u)\bigl(1-g'_1(\hat u)\bigr)du\\
& =-\frac{\#{\cal
L}({\mathbf t})\P({\cal G}(\hat u)={\mathbf
    t})}{p_0^{(\hat u)}}F'(u)\bigl(1-\mu(\hat u)\bigr)du
\end{align*}
by Equation (\ref{eq:pzero}), which yields (\ref{PropA3}).

\medskip
Summing (\ref{PropA3}) over all finite trees $\mathbf{t}$ gives
$$\P(\#\mathcal{G}(A-)<+\infty\bigm|A=u)=\E\left[M(\hat
  u,\mathcal{G}(\hat u))\right]=1$$
by the martingale property, which is (\ref{PropA5}).

\medskip

Finally,  (\ref{PropA1}) gives
$$\P(A=\bar u)=\P(\forall u<\bar u, A>u)=\lim_{u\to\bar u}1-\bar
F(u)=F(\bar u-).$$
As $F$ is non-increasing, $F(\bar u-)$ indeed exists. Moreover, 
we have by taking the limit in (\ref{Forgu})
$$F(\bar u-)=g_{\bar u}(F(\bar u-))$$
and, by assumption (\ref{assump}), the only fixed points of $g_{\bar
  u}$  are 0 and 1, which gives (\ref{PropA1bis}).

This together with Formula (\ref{PropA1}) also give
$$
A\overset{d}{=}\bar{F}^{-1}(\gamma)\overset{d}{=}\bar{F}^{-1}(1-\gamma).
$$
Thus $(A, \bar{F}(A))\overset{d}{=}(\bar{F}^{-1}(1-\gamma),
1-\gamma).$ So we have
$$
(A, F(A))=(A, 1-\bar{F}(A))\overset{d}{=}(\bar{F}^{-1}(1-\gamma),
\gamma).
$$
which is just (\ref{PropA4}).  \qed

\end{proof}

With Remark \ref{rem:Gstar} and Proposition \ref{PropA} in
hand, we have the following representation of the ascension process
$\{{\cal G}(\alpha): 0\leq \alpha<A\}$ under assumption (\ref{assump}).

\begin{proposition}\label{PropRepA}Under assumption (\ref{assump}), we have
\bgeqn \label{PropRepA1} \{{\cal G}(u),\ 0\leq
u<A\}\overset{d}{=} \{{\cal G}^*(u \gamma): 0\leq
u<\bar{F}^{-1}(1-\gamma)\},
 \edeqn
where $\gamma$ is a r.v with uniform distribution on $(0,1)$,
independent of $\{{\cal G}^{*}(u):0\leq u\leq1\}.$ 
\end{proposition}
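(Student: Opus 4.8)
The plan is to establish the identity in distribution \eqref{PropRepA1} by showing that both sides are time-inhomogeneous Markov processes on the state space $\mathbf{T}$ (with the same initial law and the same transition mechanism) after a suitable time-change. Concretely, I would first identify the one-dimensional marginals, then lift this to the full process law by matching transition rates. The key bridge between the two sides is Remark \ref{rem:Gstar}, which for the critical case ($\mu(1)=1$) gives the explicit absolute-continuity relation $\P(\mathcal{G}^*(u)=\mathbf{t})=M(u,\mathbf{t})\P(\mathcal{G}(u)=\mathbf{t})$, together with Proposition \ref{PropA}, which describes the law of the pre-ascension tree $\mathcal{G}(A-)$ via the same density $M(\hat u,\mathbf{t})$ evaluated at the conjugate parameter.

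First I would fix the randomization: let $\gamma$ be uniform on $(0,1)$, independent of $\{\mathcal{G}^*(\alpha):0\le\alpha\le 1\}$, and on the left-hand side recall from \eqref{PropA4} that $(A,F(A))\overset{d}{=}(\bar F^{-1}(1-\gamma),\gamma)$, so that conditionally on $A=u$ we have $F(u)=\gamma$ and the conjugate time is $\hat u=uF(u)=u\gamma$. The natural guess, matching the time horizons $\bar F^{-1}(1-\gamma)$ on both sides, is that the time-change $u\mapsto u\gamma$ sends the process $\{\mathcal{G}^*(u\gamma)\}$ onto $\{\mathcal{G}(u)\}_{u<A}$. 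To verify the one-dimensional marginals I would condition on $\{A=u\}$ (equivalently on $\gamma=F(u)$) and compute, for a fixed time $\alpha<A$, the law of $\mathcal{G}(\alpha)$. Using Proposition \ref{PropGuGuhat} to pass from $\mathcal{G}(\alpha)$ to its conjugate and Remark \ref{rem:Gstar} to recognize the density $M$ as the Radon--Nikodym derivative of $\mathcal{G}^*$ against $\mathcal{G}$, the marginal of $\mathcal{G}(\alpha)$ given $A=u$ should coincide with the marginal of $\mathcal{G}^*(\alpha\gamma)=\mathcal{G}^*(\alpha F(u))$; the algebraic heart is the relation $\hat\alpha=\alpha F(\alpha)$ versus the relevant conjugate parameter induced by conditioning, which I expect to reconcile exactly because of \eqref{eq:dual2} characterizing conjugates.

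To upgrade marginals to processes, I would check that both sides share the same forward transition law between two times $\alpha<\beta$. On the left, conditionally on $A=u$, the process $\{\mathcal{G}(v)\}_{v<u}$ is the original pruning process conditioned to stay finite up to time $u$, whose transitions are governed by Proposition \ref{PropMK} and the rates \eqref{fintran} of the Corollary. On the right, $\{\mathcal{G}^*(v\gamma)\}$ is a deterministic time-change of the $\mathcal{G}^*$ pruning process, and I would compute its transition rate at time $v$ by differentiating the density relation of Remark \ref{rem:Gstar}; the extra factor $\gamma$ from the chain rule of the time-change $v\mapsto v\gamma$ should precisely account for the difference between the $\mathcal{G}$-rate and the $\mathcal{G}^*$-rate, which differ by the $M$-tilting. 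Because both processes are Markov with countable state space $\mathbf{T}$ and with transitions only of the grafting type described before the Corollary, matching the initial law and all transition rates suffices to conclude equality in law of the whole trajectories.

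The main obstacle I anticipate is handling the conditioning on the ascension time $A$ cleanly: the left-hand process is the unconditioned ascension process \emph{stopped} before it becomes infinite, and one must show that conditioning on $\{A=u\}$ interacts correctly with the Markov transitions at earlier times $v<u$, i.e. that the conditioned process is again Markov with the tilted rates. I would address this by using the explicit joint law \eqref{PropA3} of $(A,\mathcal{G}(A-))$ as an anchor and propagating the conditioning backward through the pruning relation $\mathcal{G}(\alpha)\prec\mathcal{G}(\beta)$, invoking Proposition \ref{PropGuGuhat} to replace the super-critical parameter $u$ by its conjugate $\hat u\le 1$ at every stage so that the relevant trees are almost surely finite and the martingale $M$ (Corollary \ref{CorL}) provides a genuine probability reweighting. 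Once the conditioning is shown to produce exactly the $M$-tilted finite-tree dynamics, the identification with $\{\mathcal{G}^*(u\gamma)\}$ follows from Remark \ref{rem:Gstar} applied uniformly in time.
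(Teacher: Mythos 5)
Your strategy (match one-dimensional marginals, then match forward transition rates of the process conditioned on $A=u$ against a time-changed $\mathcal{G}^*$) is genuinely different from the paper's, and as described it has two gaps that would each require substantial work to close. First, your characterization of the left-hand side is incorrect: conditionally on $A=u$, the process $\{\mathcal{G}(v)\}_{v<u}$ is \emph{not} ``the original pruning process conditioned to stay finite up to time $u$''. The event $\{A=u\}$ pins the explosion to occur exactly at $u$; it is a null event, and the resulting conditioned dynamics are a Doob $h$-transform whose $h$-function is the conditional \emph{density} of $A$ given $\mathcal{G}(v)=\mathbf{t}$, not the conditional probability of staying finite. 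The difference is not cosmetic: the density carries an extra factor $\#\mathcal{L}(\mathbf{t})$ (one of the leaves must be the one that explodes), and this factor is precisely the source of the martingale tilting $M$ of Corollary \ref{CorL}; conditioning merely on $\{A>u\}$ would lose it. Second, and more seriously, your route to the transition rates of the right-hand side fails as stated: Remark \ref{rem:Gstar} is a statement about fixed-time marginals only, and differentiating a family of marginal densities cannot produce transition rates --- identical marginal flows are compatible with many different transition mechanisms. To carry out your plan you would have to prove, from the pruning construction of $\mathcal{G}^*$, an analogue of Proposition \ref{PropMK} and of the rate formula \eqref{fintran} for $\mathcal{G}^*$ (equivalently, that the forward dynamics of $\mathcal{G}^*$ are the $M$-Doob transform of those of $\mathcal{G}$); neither the paper nor your proposal supplies this, and it is exactly the process-level upgrade of Remark \ref{rem:Gstar} that needs proof.

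The paper's proof sidesteps both difficulties with one structural observation that is missing from your proposal: by the very definition of node pruning, for a finite tree $\mathbf{t}$ the conditional law of the whole path $(\mathcal{G}(s\beta),\,0\le s\le 1)$ given $\mathcal{G}(\beta)=\mathbf{t}$ coincides with the conditional law of $(\mathcal{G}^*(s\alpha),\,0\le s\le 1)$ given $\mathcal{G}^*(\alpha)=\mathbf{t}$ --- both are nothing but the time-scaled pruning process attached to the deterministic tree $\mathbf{t}$, so they do not remember which random mechanism produced $\mathbf{t}$. Granting this, no conditioned rates need ever be computed: it suffices to match the joint law of the anchor (terminal time, terminal tree), namely $(A,\mathcal{G}(A-))\overset{d}{=}(A,\mathcal{G}^*(\hat A))$, which follows immediately from \eqref{PropA3} combined with Remark \ref{rem:Gstar}, and then to substitute the identity \eqref{PropA4} for $(A,\hat A/A)$ to introduce the uniform variable $\gamma$. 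Your proposal correctly identifies all the anchors (Remark \ref{rem:Gstar}, Propositions \ref{PropGuGuhat} and \ref{PropA}, and \eqref{PropA4}), but it replaces this backward, terminal-tree conditioning by a forward rate computation which, at the two points indicated above, either mischaracterizes the conditioning or rests on an invalid inference from marginals to transitions.
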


\begin{proof} Let  $\{{\cal G}^*(u): 0\leq
u\leq 1\}$ be independent of $A$. Then by Remark
\ref{rem:Gstar},
 $$
 \P({\cal G}^{*}(\hat{A})={\mathbf t}\bigm| A=a)=\P({\cal G}^*(\hat{a})={\mathbf
 t})=M(\hat a,\mathbf{t})\P({\cal
G}(\hat{a})={\mathbf t}).
 $$
Thus it follows from (\ref{PropA3}) that $(A, {\cal G}(A-))\overset{(d)}{=}(A,
{\cal G}^*(\hat{A})).$ On the other hand, by the definition of
node-pruning, for every $\mathbf{t}\in \mathbf{T}$, $0\le \alpha< 1$
and $0\le \beta<\bar u$,
 $$\P\bigl(({\cal G}(s\beta), 0\leq s\leq 1)\in\cdot\bigm|{\cal G}(\beta)={\mathbf
t}\bigr) = \P\bigl(({\cal G}^{*}(s\alpha), 0\leq s\leq 1)\in\cdot\bigm|{\cal
G}^{*}(\alpha)={\mathbf t}\bigr).
$$
Thus conditioning on the terminal value implies
$$
\{{\cal G}(u),\ 0\leq u<A\}\overset{d}{=} \{{\cal
G}^*(\hat{A}u/A): 0\leq u<A\}.
$$
Then (\ref{PropRepA1}) follows from (\ref{PropA4}). \qed
\end{proof}

\begin{example}(Binary case)\\
If $p=\frac{1}{2}\dz_2+\frac{1}{2}\dz_0$, then  $\bar{u}=2$
and  ${\cal G} ({u})$ is a Galton-Watson tree with binary offspring
distribution $\frac{u}{2}\dz_2+(1-\frac{u}{2})\dz_0$ for $0\leq
u\leq 2$.  In this case, we have
$$F(u)=\frac{2}{u}-1$$
and the ascension time $A$ is distributed as
$$
\bar{F}^{-1}(1-\gamma)=\frac{2}{1+\gamma}
$$
where $\gamma$ is a uniform random variable on $(0,1)$. $A$'s density is
given by
$$f(t)=-F'(t)=\frac{2}{t^2}1_{[1,2]}(t).$$
\end{example}

\begin{example}(Geometric case)\\
We suppose that the critical offspring distribution $p$ is of the form
$$p_k=(1-\beta)^2 \beta^{k-1}\ \mbox{for }k\ge 1,\qquad
p_0=\beta.$$

In that case, we have 
$$\begin{cases}
p_k^{(u)}=(1-\beta)^2(u\beta)^{k-1} & \mbox{for }k\ge 1,\\
p_0^{(u)}=1-\frac{(1-\beta)^2}{1-u\beta},
\end{cases}$$
$\bar u=2-\beta$, and assumption (\ref{assump}) is statisfied.

We then get
$$F(u)=\frac{2-u-\beta}{1-u\beta}\frac{1}{u}$$
and the ascension time $A$ has density
$$\left(\frac{2-\beta}{u^2}+\frac{(1-\beta)^2\beta}{(1-u\beta)^2}\right)\mathbf{1}_{[1,2-\beta]}(u).$$
\end{example}

\section{Proof of Lemma \ref{lem:leaves}}\label{sec:appendix}

Let $\mathcal{G}_p$ be a Galton-Watson tree with offspring
distribution $p$ such that $p_1<1$.

If $\mathbf t$ is a tree, we denote by $(a_1,a_2,\ldots,a_m)$ the numbers of
offsprings of the inner nodes. Its number of leaves is then
$$a_1+\cdots+a_m-m+1.$$

If $\mathbf t$ is a tree with $n$ leaves, we have
$$\P(\mathcal{G}_p={\mathbf t})=p_{a_1}\cdots p_{a_m}p_0^n$$
and therefore
$$\P(\#{\cal L}_p=n)=C_p(n) p_0^n$$
with
$$C_p(n)=\sum_{\mathbf{t},\ \#\mathcal{L}(\mathbf{t})=n}p_{a_1}\cdots p_{a_m}.$$

Then we have, for every $n$ such that $C_p(n)\ne 0$,
\begin{equation}\label{eq:condleaf}
\P(\mathcal{G}_p={\mathbf t}|\#{\cal L}_p=n)=\P(\mathcal{G}_q={\mathbf t}|\#{\cal L}_q=n)\iff
\frac{p_{a_1}\ldots p_{a_m}}{C_p(n)}=\frac{q_{a_1}\ldots
  q_{a_m}}{C_q(n)}.
\end{equation}

First, let us suppose that
$$\P(\mathcal{G}_p={\mathbf t}|\#{\cal L}_p=n)=\P(\mathcal{G}_q={\mathbf t}|\#{\cal L}_q=n).$$

For $n=1$, all the trees with one leaf are those with one offspring
at each generation until the last individual dies. Therefore,
\begin{align*}
\P(\mathcal{G}_p={\mathbf t}|\#{\cal L}_p=1)=\P(\mathcal{G}_q={\mathbf t}|\#{\cal L}_q=1) & \iff \forall k\ge 0,\
p_1^k(1-p_1)=q_1^k(1-q_1)\\
&\iff p_1=q_1.
\end{align*}

We set $n_0=\inf\{n\ge 2, p_n>0\}.$
We then set $u=\left(q_{n_0}/p_{n_0}\right)^{1/(n_0-1)}$.

If the only nonzero terms of $p$ are $p_0$, $p_1$ and $p_{n_0}$, the
relation
$$q_n=u^{n-1}p_n$$
is trivally true for every $n\ge 1$.

In the other cases, let $n>n_0$ such that $p_n>0$ and
let $N$ be the integer defined by:
$$N=2(n-1)(n_0-1).$$
Let us consider first a tree $\mathbf t$ that has $N+1$ leaves, $n-1$
inner nodes with $n_0$ offsprings and $n_0-1$ inner nodes with $n$
offsprings. Applying (\ref{eq:condleaf}) to that tree gives
$$\frac{p_{n_0}^{n-1}p_n^{n_0-1}}{C_p(N+1)}=\frac{q_{n_0}^{n-1}q_n^{n_0-1}}{C_q(N+1)}\cdot$$
Then, let us consider another tree with $N+1$ leaves composed of
$2(n-1)$ inner nodes with $n_0$ offsprings. For that new tree,
(\ref{eq:condleaf}) gives
$$\frac{p_{n_0}^{2(n-1)}}{C_p(N+1)}=\frac{q_{n_0}^{2(n-1)}}{C_q(N+1)}\cdot$$
Dividing the two latter equations gives
$$q_n=u^{n-1}p_n.$$

It remains to remark that this identity also holds when $n=n_0$ and
when $p_n=0$.

\bigskip
Conversely, let us suppose that $q_n=u^{n-1}p_n$ for every $n\ge
1$. Let $n$ such that $C_p(n)\ne 0$. Then, for every
${\mathbf t}$ with $n$ leaves, we have
\begin{align*}
q_{a_1}\ldots q_{a_m}
& =u^{a_1-1}p_{a_1}\ldots u^{a_m-1}p_{a_m}\\
& =u^{a_1+\cdots a_m-m}p_{a_1}\ldots p_{a_m}\\
& =u^{n-1}p_{a_1}\ldots p_{a_m}.
\end{align*}

We then have $C_q(n)=u^{n-1}C_p(n)$ and

$$\frac{q_{a_1}\ldots q_{a_m}}{C_q(n)}=\frac{u^{n-1}p_{a_1}\ldots
  p_{a_m}}{u^{n-1}C_p(n)}=\frac{p_{a_1}\ldots p_{a_m}}{C_p(n)},$$
that is
$$\P(\mathcal{G}_p={\mathbf t}|\#{\cal
  L}_p=n)=\P(\mathcal{G}_q={\mathbf t}|\#{\cal L}_q=n).$$

\bigskip
{\bf Acknowlegments}: The authors wish to thank an anonymous referee
for his remarks that improved significantly the presentation of the
paper.

Hui He also wants to thank the laboratory MAPMO for his pleasant stay
at Orl\'eans where this work was done.

\bigskip

\bigskip

\newcommand{\sortnoop}[1]{}

\end{document}